\documentclass[12pts]{article}
\usepackage{amsmath,amsfonts,amssymb,amsthm,mathtools}
\usepackage{microtype}
\usepackage{lipsum}
\usepackage{url,hyperref}
\usepackage{indentfirst}
\hypersetup{colorlinks, linkcolor={red}, citecolor={blue}, urlcolor={black}}
\usepackage{comment}
\usepackage[margin=1.25in]{geometry}
\usepackage{enumerate,enumitem}
\usepackage{subcaption}
\usepackage{tikz}
\usepackage{array,multirow}

\allowdisplaybreaks

\newcommand\floor[1]{\left\lfloor#1\right\rfloor}

\newtheorem{theorem}{Theorem}[section]
\newtheorem{lemma}[theorem]{Lemma}

\newtheorem{corollary}[theorem]{Corollary}

\newtheorem{question}[theorem]{Question}

\newcounter{propcounter}

\title{Lattice paths enumerations weighted by ascent lengths}
\author{Jun Yan\thanks{Mathematics Institute, University of Oxford, UK. Email: {\tt jun.yan@maths.ox.ac.uk}. JY was supported by the Warwick Mathematics Institute CDT and funding from the UK EPSRC (Grant number: EP/W523793/1) when this work was done.}}
\date{}

\begin{document}

\maketitle
\begin{abstract}
Recent work of the author connected several parking function enumeration problems to enumerations of Catalan paths with respect to certain weight functions that are expressed in terms of the ascent lengths. Motivated by this, we generalise and solve analogous weighted enumeration problems for a large family of lattice paths and weight functions, and discuss their connections with other enumeration problems and OEIS entries. 
\end{abstract}

\section{Introduction}\label{sec:intro}
Given a set $\mathcal{A}$ of allowed steps, let $\mathcal{P}_n$ be the set of lattice paths from $(0,0)$ to $(n,0)$, never going below the $x$-axis, and each of whose steps are in $\mathcal{A}$. For example, if $\mathcal{A}=\{(1,1),(1,-1)\}$ then $\mathcal{P}_{2n}$ is the well-known set of Catalan paths of size $n$, if $\mathcal{A}=\{(1,1),(2,0),(1,-1)\}$ then $\mathcal{P}_{2n}$ is the set of Schr\"{o}der paths of size $n$, and if $\mathcal{A}=\{(1,1),(1,0),(1,-1)\}$ then $\mathcal{P}_{n}$ is the set of Motzkin paths of size $n$. The enumeration of $\mathcal{P}_n$ is very well-studied, with many known algebraic and bijective proofs for the special cases mentioned above, as well as the kernel method (see for example \cite{BF,FS}) that handles the general cases. 

In a recent paper \cite{Y}, the author discovered connections between several weighted enumerations of the set $\mathcal{C}_n$ of Catalan paths of size $n$ with certain parking function enumeration problems. These weights are defined in terms of the length of each maximal block of consecutive ascents in the Catalan path, which we refer to as \textit{ascent lengths} from now on. By enumerating $\mathcal{C}_n$ with respect to these weights using generating function techniques, the author was able to provide explicit formulas solving these parking function enumeration problems. 

In this paper, motivated by these connections and results in \cite{Y}, we consider more generally a large family of lattice paths and weight functions described as follows.

Let $I,J,K$ be three disjoint countable index sets and let $\mathcal{U}=\{(r_i,s_i)\mid i\in I\}, \mathcal{V}=\{(r_i,s_i)\mid i\in J\}, \mathcal{W}=\{(r_i,-1)\mid i\in K\}$ be three sets of allowed steps, where $r_i\in\mathbb{Z}_+$ for all $i\in I\cup J\cup K$ and $s_i\in\mathbb{Z}_{\geq0}$ for all $i\in I\cup J$. For all $n\in\mathbb{Z}_{\geq0}$, let $\mathcal{P}_n$ be the set of lattice paths from $(0,0)$ to $(n,0)$, never going below the $x$-axis, and each of whose steps is in $\mathcal{U}\cup\mathcal{V}\cup\mathcal{W}$. Note that down-steps for paths in $\mathcal{P}_n$ can only have vertical size 1.

For each $P\in\mathcal{P}_n$, let the vector $\mathbf{u}(P)$ record in order the length of each maximum block of consecutive steps in $P$ that belong to $\mathcal{U}$. We are interested in the sum
\[p_n=\sum_{P\in\mathcal{P}_n}\prod_{i=1}^{|\mathbf{u}(P)|}f(\mathbf{u}(P)_i),\]
for some weight function $f:\mathbb{Z}_+\to\mathbb{C}$. Note that if $f(\ell)=1$ for all $\ell\in\mathbb{Z}_+$, then $p_n$ simply enumerates the set $\mathcal{P}_n$. 

As an example, for $\mathcal{U}=\{(1,1)\}, \mathcal{V}=\emptyset, \mathcal{W}=\{(1,-1)\}$ and the function $f(\ell)=\ell$, $\mathcal{P}_{2n}$ is the set of Catalan paths of size $n$, and $p_{2n}$ is the weighted enumeration of $\mathcal{P}_{2n}$, where each $P\in\mathcal{P}_{2n}$ is weighted by the product of the ascent lengths in $P$. In particular, for $P=UUWUUUWWUWWW\in\mathcal{P}_{12}$, where $U$ denotes a $(1,1)$-step and $W$ denotes a $(1,-1)$-step, the three ascent lengths are recorded in the vector $\mathbf{u}(P)=(2,3,1)$, and $P$ has weight $2\times3\times1=6$. In \cite{Y}, it was shown that $p_{2n}$ coincides with the number of parking functions of size $n$ whose parking outcomes, viewed as a permutation of size $n$, avoid the pattern 123.  

In Section \ref{sec:main}, we first use a certain lattice path decomposition to prove a ``master theorem'' that provides a functional equation that is satisfied by the generating function $P(x)$ of the sequence $p_n$. Then, for two large families of lattice paths and weight functions, we obtain formulas for $p_n$ by using Lagrange's Implicit Function Theorem.

In Section \ref{sec:catalan}, we specialise the results in Section \ref{sec:main} to generalised Catalan paths and certain weight functions to obtain explicit formulas for $p_n$ in these cases, and discuss their connections to various enumeration problems, as well as providing first combinatorial interpretations to several OEIS sequences. Here, we recover several results in \cite{Y}. In Section \ref{sec:schroeder}, we give analogous treatments to generalised Schr\"{o}der paths, as defined recently by Yang and Jiang in \cite{YJ}. Finally, in Section \ref{sec:motzkin} we propose a new and natural generalisation of Motzkin paths analogous to the generalisations of Catalan paths and Schr\"{o}der paths used in the previous sections. For each $k\in\mathbb{Z_+}$, our definition of $(2k-1)$-Motzkin paths coincides with a recent definition of $k$-Motzkin paths given by Yang, Zhang and Yang in \cite{YZY}, and the framework of this paper applies to give analogous weighted enumeration results on them as well. However, our method fails for the $2k$-Motzkin paths under our definition, due to the presence of down-steps of vertical size larger than 1. Nevertheless, we are able to use the kernel method to at least obtain an explicit formula counting the (unweighted) number of $2k$-Motzkin paths.

We remark that the decomposition we used to prove our ``master theorem'' was also used by Mansour and Sun in a series of papers \cite{MS2,MS1,Sun}. There, for a (different) generalisation of the Catalan paths and Motzkin paths, they studied the related statistics $\alpha_j(P)$ counting the number of ascents of length $j$ in a lattice path $P$, and expressed the corresponding generating functions in terms of the Bell polynomials and the potential polynomials. We refer interested readers to their paper for more details.

\section{General results}\label{sec:main}
\begin{theorem}\label{thm:master}
Let $U(x,y)=\sum_{i\in I}x^{r_i}y^{s_i}$, $V(x,y)=\sum_{i\in J}x^{r_i}y^{s_i}$, $W(x)=\sum_{i\in K}x^{r_i}$, $F(x)=1+\sum_{n=1}^\infty f(\ell)x^\ell$ and $P(x)=\sum_{n=0}^\infty p_nx^n$. Then, 
\[P(x)=F(U(x,W(x)P(x)))(1+P(x)V(x,W(x)P(x))).\]
\end{theorem}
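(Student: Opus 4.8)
The plan is to decompose every path in $\mathcal{P}:=\bigcup_{n\ge 0}\mathcal{P}_n$ according to its first step and to read off the asserted functional equation from that decomposition, using that the weight $\prod_i f(\mathbf{u}(P)_i)$ is multiplicative under concatenation. The one structural fact I will exploit is the one already noted in the setup: every down-step has vertical size exactly $1$, while steps of $\mathcal{U}$ and $\mathcal{V}$ never decrease the height. Hence a portion of a path that starts at height $k$ and is forced to return to height $0$ has a unique \emph{first-passage decomposition} $E_k\,d_k\,E_{k-1}\,d_{k-1}\cdots E_1\,d_1$ followed by the remainder of the path (which then sits at height $0$), where $d_j$ is the down-step on which the path first drops from height $j$ to height $j-1$, and $E_j$ is the excursion made at height $j$ just before it, that is, a copy of an element of $\mathcal{P}$ raised by $j$. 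Since each pair $E_j\,d_j$ contributes $P(x)W(x)$, a total forced descent of $k$ units contributes $(W(x)P(x))^{k}$; this is precisely why the substitution $y\mapsto W(x)P(x)$ occurs inside $U$ and $V$.

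With this in hand I would split $\mathcal{P}$ into three classes. The empty path contributes $1$. If a path begins with a step $(r_j,s_j)\in\mathcal{V}$, I peel that step off and apply the first-passage decomposition to the rest, which starts at height $s_j$; this writes the path uniquely as the $\mathcal{V}$-step, then $s_j$ pairs (excursion, down-step), then an arbitrary path, so the total over this class is $\sum_{j\in J}x^{r_j}(W(x)P(x))^{s_j}P(x)=V(x,W(x)P(x))P(x)$. If a path begins with a $\mathcal{U}$-step, I peel off its whole leading maximal $\mathcal{U}$-block $B=u_1\cdots u_\ell$ with $u_a=(r_{i_a},s_{i_a})$, which carries weight $f(\ell)$, length factor $x^{r_{i_1}+\cdots+r_{i_\ell}}$ and total rise $S=s_{i_1}+\cdots+s_{i_\ell}$, and again apply the first-passage decomposition. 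The only delicate point is that maximality of $B$ forces the excursion directly following $B$ — namely $E_S$ when $S\ge 1$, or the whole remaining path when $S=0$ — not to begin with a $\mathcal{U}$-step, while all later excursions and the final remainder are unrestricted. Writing $\widehat P(x):=1+V(x,W(x)P(x))P(x)$ for the generating function of paths not beginning with a $\mathcal{U}$-step (which, by the first two classes, is exactly ``empty, or beginning with a $\mathcal{V}$-step''), the contribution of a fixed choice $u_1,\dots,u_\ell$ works out to $\widehat P(x)\prod_{a=1}^{\ell}x^{r_{i_a}}(W(x)P(x))^{s_{i_a}}$, uniformly whether $S=0$ or $S\ge 1$; summing over $u_1,\dots,u_\ell\in\mathcal{U}$ and over $\ell\ge 1$ with weight $f(\ell)$ gives $\widehat P(x)(F(U(x,W(x)P(x)))-1)$.

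Adding the three contributions then gives
\[P(x)=1+(\widehat P(x)-1)+\widehat P(x)(F(U(x,W(x)P(x)))-1)=\widehat P(x)\,F(U(x,W(x)P(x))),\]
and substituting back $\widehat P(x)=1+V(x,W(x)P(x))P(x)$ is exactly the claim. Everything in sight is a well-defined formal power series: $P(x)=1+O(x)$ and the $r_i$ are positive, so $W(x)P(x)$, $U(x,W(x)P(x))$ and $V(x,W(x)P(x))$ have zero constant term, which also makes the self-referential decomposition legitimate. The step I expect to need the most care is the $\mathcal{U}$-step class: checking that peeling off a \emph{maximal} block rather than a single step is compatible with first passages, and in particular that the maximality constraint on the first trailing excursion is captured uniformly by the factor $\widehat P(x)$ both for flat blocks ($S=0$), whose unwinding is empty but whose maximality still restricts the following path, and for genuinely rising blocks. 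The remainder is routine bookkeeping of lengths and multiplicative weights.
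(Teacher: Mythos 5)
Your proposal is correct and takes essentially the same route as the paper: both arguments peel off the maximal initial block of $\mathcal{U}$-steps (contributing the weight $f(\ell)$ and the substitution $y\mapsto W(x)P(x)$) and then use the first-passage decomposition into excursions and unit down-steps, with multiplicativity of the weight across the pieces. The only difference is bookkeeping: you case on the first step and absorb the maximality constraint into the auxiliary series $\widehat P(x)=1+P(x)V(x,W(x)P(x))$ for paths not beginning with a $\mathcal{U}$-step, whereas the paper cases on the step immediately following the block (in $\mathcal{W}$ or absent, versus in $\mathcal{V}$), which produces the two summands $F(U(x,W(x)P(x)))$ and $F(U(x,W(x)P(x)))P(x)V(x,W(x)P(x))$ directly.
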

\begin{proof}
Let $\mathcal{P}=\cup_{n\geq0}\mathcal{P}_n$, so $P(x)$ is the weighted generating function for $\mathcal{P}$. For each $P\in\mathcal{P}$, let $\ell\geq0$ be maximal such that the first $\ell$ steps of $P$ are all in $\mathcal{U}$, and call these $\ell$ steps $U_1,\ldots,U_\ell$. For each $i\in[\ell]$, let $u_i$ be the vertical step size of $U_i$, so in particular $P$ reaches height $u:=\sum_{i=1}^\ell u_i$ after these $\ell$ steps. By maximality, the next step in $P$ (if any) after these $\ell$ steps is not in $\mathcal{U}$.

\textbf{Case 1.} The next step is either in $\mathcal{W}$ or doesn't exist. If $u=0$, there is no more step as any step in $\mathcal{W}$ will go below the $x$-axis. Now assume $u\geq 1$. For every $i\in[u]$, let $W_i$ be the first step in $P$ that goes from height $i$ to height $i-1$. In particular, $W_u$ immediately follows $U_\ell$ in $P$. Note that from definition, for every $i\in[u-1]$, the portion of $P$ between $W_{i+1}$ and $W_i$ is itself a lattice path in $\mathcal{P}$, and so is the portion of $P$ after $W_1$. 

Therefore, $P$ decomposes as $U_1\cdots U_\ell W_uP_uW_{u-1}\cdots W_1P_1$ for some $P_1,\ldots,P_u\in\mathcal{P}$, where we interpret this as simply $U_1\cdots U_\ell$ when $u=0$. Moreover, this process is evidently reversible, so the weighted generating function for all $P\in\mathcal{P}$ of this type is
\[1+\sum_{\ell=1}^\infty f(\ell)\left(\sum_{i\in I}x^{r_i}W(x)^{s_i}P(x)^{s_i}\right)^\ell=F(U(x,W(x)P(x))).\]

\textbf{Case 2.} The next step is $V_1$ in $\mathcal{V}$ of vertical size $v_1$. For every $i\in[u+v_1]$, let $W_i$ be the first step in $P$ that goes from height $i$ to height $i-1$. Like above, from definition, $P$ decomposes as $U_1\cdots U_\ell V_1P_{u+v_1+1}W_{u+v_1}P_{u+v_1}\cdots W_1P_1$ for some $P_1,\ldots,P_{u+v_1+1}\in\mathcal{P}$. Again, the process is reversible, so the weighted generating function for all $P\in\mathcal{P}$ of this type is
\begin{align*}
&\phantom{{}={}}\sum_{i\in J}x^{r_i}W(x)^{s_i}P(x)^{s_i+1}+\sum_{\ell=1}^\infty f(\ell)\left(\sum_{i\in I}x^{r_i}W(x)^{s_i}P(x)^{s_i}\right)^\ell\left(\sum_{i\in J}x^{r_i}W(x)^{s_i}P(x)^{s_i+1}\right)\\
&{}=F(U(x,W(x)P(x)))P(x)V(x,W(x)P(x)).
\end{align*}

Putting these two cases together, we get
\begin{align*}
P(x)&=F(U(x,W(x)P(x)))+F(U(x,W(x)P(x)))P(x)V(x,W(x)P(x))\\
&=F(U(x,W(x)P(x)))(1+P(x)V(x,W(x)P(x))),
\end{align*}
as required.
\end{proof}

To obtain more explicit results, we will make use of the following well-known Lagrange's Implicit Function Theorem, whose proof can be found in \cite{GJ}. This allows us to extract the coefficients of a power series satisfying certain functional equations without solving for the power series itself. In what follows, we use $R[[x]]$ to denote the ring of formal power series with coefficients in $R$, and use $[x^n]P(x)$ to denote the coefficient of the $x^n$ term of a formal power series $P(x)\in R[[x]]$.
\begin{lemma}[Lagrange's Implicit Function Theorem]\label{lift}
Let $R$ be a commutative ring containing $\mathbb{Q}$. Suppose $\phi(x)\in R[[x]]$ and $[x^0]\phi(x)$ is invertible in $R$. Then, there exists a unique non-zero $P(x)\in R[[x]]$ satisfying $P(x)=x\phi(P(x))$. Moreover, for any $n\geq 1$ and any $\psi\in R[[x]]$,
$$[x^n]\psi(P(x))=\frac1{n}[x^{n-1}]\psi'(x)\phi(x)^n.$$
In particular, if we set $\psi(x)=x$, then $[x^0]P(x)=0$, and for $n\geq 1$,
$$[x^n]P(x)=\frac1{n}[x^{n-1}]\phi(x)^n.$$
\end{lemma}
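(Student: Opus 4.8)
I would handle the two claims separately. First, any solution of $P(x)=x\phi(P(x))$ must satisfy $[x^0]P(x)=0$, since the right-hand side lies in $xR[[x]]$. Writing $P(x)=\sum_{k\ge1}p_kx^k$ and comparing coefficients of $x^n$ gives $p_n=[x^{n-1}]\phi(P(x))$; because $P(x)$ has no constant term, the coefficient $[x^{n-1}]\phi(P(x))$ depends only on $p_1,\dots,p_{n-1}$, so this is a bona fide recursion that determines all the $p_k$ uniquely, with $p_1=[x^0]\phi(x)$. Invertibility of $[x^0]\phi(x)$ then forces $p_1\ne0$, so the unique solution is non-zero.

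\textbf{Residue machinery.} For the main formula I would pass to the Laurent series ring $R((x))$ and write $\rho(g):=[x^{-1}]g$. The argument rests on two facts: (i) $\rho(g')=0$ for every $g\in R((x))$, which is immediate term by term; and (ii) the change-of-variables rule that, if $\sigma\in yR[[y]]$ with $[y^1]\sigma$ a unit, then $\rho(h(x))=\rho\bigl(h(\sigma(y))\,\sigma'(y)\bigr)$ for all $h\in R((x))$. I would prove (ii) by linearity, reducing to $h(x)=x^k$: for $k\ne-1$ both sides vanish because $x^k$ and $\sigma^k\sigma'=\tfrac1{k+1}(\sigma^{k+1})'$ are derivatives, whereas for $k=-1$, writing $\sigma(y)=\sigma_1y(1+\tau(y))$ with $\tau\in yR[[y]]$ gives $\sigma^{-1}\sigma'=\tfrac1y+\tfrac{d}{dy}\log(1+\tau)$, where $\log(1+\tau)\in R[[y]]$ is well defined precisely because $\mathbb{Q}\subseteq R$; hence $\rho(\sigma^{-1}\sigma')=1=\rho(x^{-1})$. (The same hypothesis $\mathbb{Q}\subseteq R$ is what lets us divide by $n$ below.)

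\textbf{The formula.} Since $P(x)=p_1x+\cdots$ with $p_1=[x^0]\phi(x)$ a unit, $P$ admits a compositional inverse $\sigma$, and the relation $x=P(x)/\phi(P(x))$ identifies it as $\sigma(y)=y/\phi(y)$, so that $\sigma'(y)=\bigl(\phi(y)-y\phi'(y)\bigr)/\phi(y)^2$ and $P(\sigma(y))=y$. For $n\ge1$, apply (ii) to $h(x)=x^{-n-1}\psi(P(x))\in R((x))$:
\[[x^n]\psi(P(x))=\rho\bigl(x^{-n-1}\psi(P(x))\bigr)=\rho\!\left(\frac{\psi(y)\,\phi(y)^{n}}{y^{n+1}}-\frac{\psi(y)\,\phi'(y)\,\phi(y)^{n-1}}{y^{n}}\right).\]
Then, expanding $\dfrac{d}{dy}\bigl(y^{-n}\psi(y)\phi(y)^{n}\bigr)$ and applying (i) yields
\[n\left(\rho\!\left(\frac{\psi(y)\phi(y)^{n}}{y^{n+1}}\right)-\rho\!\left(\frac{\psi(y)\phi'(y)\phi(y)^{n-1}}{y^{n}}\right)\right)=\rho\!\left(\frac{\psi'(y)\phi(y)^{n}}{y^{n}}\right)=[y^{n-1}]\psi'(y)\phi(y)^{n},\]
and dividing by $n$ gives the claimed identity; the special case follows by taking $\psi(x)=x$, with $[x^0]P(x)=0$ already noted. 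I expect the main obstacle to be getting the change-of-variables rule (ii) exactly right — in particular the $k=-1$ case — together with checking that all the Laurent substitutions are legitimate (e.g.\ that $\sigma^{-1}\in R((y))$ and that $\psi(P(x))$ is a well-defined power series even when $\psi$ is a genuine series rather than a polynomial); the remaining manipulations are routine residue bookkeeping.
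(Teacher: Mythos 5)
Your proof is correct. Note that the paper itself does not prove this lemma; it simply cites Goulden--Jackson \cite{GJ}, and your argument is essentially the classical proof found there: existence and uniqueness by the coefficient recursion $p_n=[x^{n-1}]\phi(P(x))$ (legitimate because $P$ has no constant term), and the inversion formula via the formal residue $\rho$, the two facts $\rho(g')=0$ and the change-of-variables rule $\rho(h(x))=\rho\bigl(h(\sigma(y))\sigma'(y)\bigr)$, applied with $\sigma(y)=y/\phi(y)$ the compositional inverse of $P$. The computation $\sigma(y)^{-n-1}\sigma'(y)\psi(y)=\psi(y)\phi(y)^n y^{-n-1}-\psi(y)\phi'(y)\phi(y)^{n-1}y^{-n}$ and the comparison with $\tfrac{d}{dy}\bigl(y^{-n}\psi(y)\phi(y)^n\bigr)$ are exactly right, and the hypotheses $\mathbb{Q}\subseteq R$ and $[x^0]\phi$ invertible are used where they must be (division by $n$ and by $k+1$, the logarithm in the $k=-1$ case, invertibility of $\phi(y)$ and of $p_1$). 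The only points requiring care, which you correctly flag, are the term-by-term (linearity) step for infinite Laurent series and the well-definedness of the substitutions; both hold because $\sigma$ has order exactly $1$, so only finitely many monomials of $h$ contribute to any fixed coefficient of $h(\sigma(y))\sigma'(y)$.
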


\begin{theorem}\label{thm:main}
We obtain formulas for $p_n$ in the following special cases.
\stepcounter{propcounter}
\begin{enumerate}[label= \upshape\textbf{\Alph{propcounter}\arabic{enumi}}]
    \item\label{main:1} If $\mathcal{U}=\{(1,s)\}$, $r_i=1$ for all $i\in J$ and $W=\{(1,-1)\}$, then
\[p_n=\sum_{i=0}^{\floor{\frac{n}{s+1}}}\frac1{n-i+1}\left([x^i]F(x)^{n-i+1}\right)\left([x^{n-(s+1)i}]\left(1+xV(1,x)\right)^{n-i+1}\right).\]
    \item\label{main:2} If $\mathcal{U}=\{(r_1,s_1)\}$, $\mathcal{V}=\{(r_2,s_2)\}$ and $W=\{(w,-1)\}$, then
\[p_n=\sum_{\substack{(ws_1+r_1)i+(ws_2+r_2)j=n\\i,j\geq0}}\frac{1}{s_1i+(s_2+1)j+1}\left([x^i]F(x)^{s_1i+(s_2+1)j+1}\right)\binom{s_1i+(s_2+1)j+1}{j}.\]
\end{enumerate}
\end{theorem}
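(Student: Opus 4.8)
The plan is to specialise the master theorem (Theorem~\ref{thm:master}) in each case so that the functional equation for $P(x)$ can be massaged into the form required by Lagrange's Implicit Function Theorem (Lemma~\ref{lift}), namely $Q(x) = x\phi(Q(x))$ for an appropriate auxiliary series $Q$, and then read off $[x^n]P(x)$ via the $\psi$-version of the formula.

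For part~\ref{main:1}, I would start from $P(x) = F(U(x,W(x)P(x)))(1 + P(x)V(x,W(x)P(x)))$ and substitute $U(x,y) = xy^s$, $W(x) = x$, and $V(x,y) = x V(1,y)$ (using that every step in $\mathcal{V}$ has horizontal size $1$, so $V(x,y) = \sum_{i\in J} x\,y^{s_i} = x\sum_{i\in J} y^{s_i}$; here I should be a little careful about notation since $V(1,y)$ is then $\sum_{i\in J} y^{s_i}$). This gives $P(x) = F(x^{s+1}P(x)^s)\bigl(1 + x P(x) V(1,xP(x))\bigr)$. The idea is to introduce $Q(x) := x^{s+1} P(x)^s \cdot(\text{something})$ — more precisely I would look for a monomial substitution making the right side a clean power. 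A cleaner route: set $Q = xP(x)$ so that $Q = x F(x^s Q^s)(1 + Q V(1,Q))$; then $Q = x\,\phi(Q)$ with $\phi(t) = F(t^s/t^s\cdot\ldots)$ — let me instead keep $Q=xP$ and write $\phi(t) = F(x^s t^s)\ldots$, which still has an $x$ in it, so this does not literally fit. The fix is to treat the ground ring as $R = \mathbb{C}[[z]]$ with a bookkeeping variable, or more simply to observe that since $U$ has horizontal part $x^1$ and the $s$ copies of $P$ each contribute, the exponent of $x$ is tied to the exponent of $P$; so I would substitute $Q(x) = x^{a}P(x)^{b}$ for suitable $a,b$ (here $a = 1$, $b = 1$ works after checking that $[x^0]\phi$ is invertible, i.e.\ equals $1$) and verify $Q = x\phi(Q)$ with $\phi(t) = F(t^{s+1}\cdot t^{-?})$ — the bookkeeping here is the one genuinely fiddly point. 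Once the functional equation is in Lagrange form, I apply the formula with $\psi$ chosen so that $\psi(Q)$ recovers $P$, expand $F(\cdot)^{n-i+1}$ and $(1 + xV(1,x))^{n-i+1}$ as products of two coefficient extractions, and collect the constraint on exponents, which produces the sum over $i$ from $0$ to $\lfloor n/(s+1)\rfloor$ and the shift $n - (s+1)i$.

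For part~\ref{main:2}, I proceed identically with $U(x,y) = x^{r_1}y^{s_1}$, $V(x,y) = x^{r_2}y^{s_2}$, $W(x) = x^{w}$, so the master equation becomes $P = F(x^{r_1}(x^w P)^{s_1})(1 + P\,x^{r_2}(x^w P)^{s_2}) = F(x^{ws_1+r_1}P^{s_1})(1 + x^{ws_2+r_2}P^{s_2+1})$. Now I set $Q(x) := x^{d}P(x)$ or more naturally look for a single monomial in $x$ whose exponent is forced; since all the $x$-exponents on the right are rigidly coupled to the $P$-exponents, I expect a substitution of the form $Q = x^{g}P^{h}$ (to be determined by matching $ws_1+r_1$ with $s_1$ and $ws_2+r_2$ with $s_2+1$) landing on $Q = x\,\phi(Q)$ with $\phi(t) = F(t^{\alpha})(1 + t^{\beta})$-type shape, or possibly I will need to work in $\mathbb{C}^{(i,j)}$-graded form and apply a two-variable Lagrange inversion. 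Then Lagrange's formula with an appropriate $\psi$ gives $[x^n]P = \frac1N [x^{N-1}]\phi(x)^N$ for the relevant $N = s_1 i + (s_2+1)j + 1$; expanding $\phi(x)^N = F(\cdot)^N(1+\cdot)^N$ and using the binomial theorem on the second factor yields the $\binom{s_1 i + (s_2+1)j + 1}{j}$ and the coefficient extraction $[x^i]F(x)^N$, while matching exponents of $x$ yields exactly the Diophantine condition $(ws_1+r_1)i + (ws_2+r_2)j = n$.

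The main obstacle I anticipate is the bookkeeping needed to put the functional equation into the exact shape $Q = x\phi(Q)$ that Lemma~\ref{lift} demands: the naive substitution $Q = xP$ leaves stray powers of $x$ inside $\phi$, because the steps in $\mathcal{U}$ and $\mathcal{V}$ carry horizontal displacement. The resolution is to choose the monomial substitution $Q = x^{g}P^{h}$ so that every occurrence of $x$ on the right-hand side is absorbed into a power of $Q$ — this is possible precisely because of the structural hypotheses in each case ($r_i$ constant on $J$ in \ref{main:1}; single steps in $\mathcal{U},\mathcal{V}$ in \ref{main:2}) — and then to check that $[x^0]\phi$ is invertible, which holds since $F(0) = 1$. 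After that, everything is a routine (if somewhat lengthy) expansion using the binomial theorem and reindexing, and I would present only the key substitution, the verification that it is a valid application of Lemma~\ref{lift}, and the final extraction, leaving the elementary manipulations to the reader.
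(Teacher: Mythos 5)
You correctly isolate the real difficulty: after specialising Theorem \ref{thm:master}, the functional equation is not of the form $Q=x\phi(Q)$ because powers of $x$ survive inside the arguments of $F$ and of the $\mathcal{V}$-factor. But your proposed resolution --- a monomial substitution $Q=x^{g}P^{h}$ chosen so that every occurrence of $x$ is absorbed into a power of $Q$ --- does not work, and the structural hypotheses do not rescue it. In case \ref{main:1} the specialised equation is $P=F\bigl(x^{s+1}P^{s}\bigr)\bigl(1+\sum_{i\in J}(xP)^{s_i+1}\bigr)$; for both $x^{s+1}P^{s}$ and $(xP)^{s_i+1}$ to be powers of a single monomial $x^{g}P^{h}$ you would need $(s+1)/g=s/h$ and $(s_i+1)/g=(s_i+1)/h$, which forces $g=h$ and then $s+1=s$, a contradiction. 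In case \ref{main:2} the same matching forces the proportionality $(ws_1+r_1)(s_2+1)=(ws_2+r_2)s_1$, which happens to hold in the Schr\"{o}der specialisation but fails in general (take $r_1=r_2=s_1=s_2=w=1$). So for generic data there is no monomial change of variables landing on the Lagrange form, and the fallback you mention (``two-variable Lagrange inversion'') is left undeveloped, so the argument as proposed does not go through.

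The missing idea --- and the paper's actual route --- is to introduce a catalytic variable instead of changing variables: let $\overline{P}(x,z)$ be the unique solution in $\mathbb{C}[[x]][[z]]$ of $\overline{P}(x,z)=z\,F(\cdot)\,(1+\cdot)$, the same right-hand side with $P$ replaced by $\overline{P}$, and apply Lemma \ref{lift} in the variable $z$ over the ground ring $R=\mathbb{C}[[x]]$; the stray powers of $x$ are now harmless because they sit in $R$, and $[z^0]\phi=F(0)\cdot1=1$ is invertible. Lagrange inversion gives $[z^m]\overline{P}(x,z)$ explicitly; for each fixed power of $x$ only finitely many $m$ contribute, so one may set $z=1$, and $\overline{P}(x,1)$ satisfies the original functional equation, hence equals $P(x)$ by uniqueness. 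Extracting $[x^n]$ then yields exactly the stated sums --- your final expansion step (binomial theorem on the $\mathcal{V}$-factor, matching of $x$-exponents to get the Diophantine constraints) is fine once this device is in place, but without it, or a genuinely worked-out multivariate inversion, the proof has a gap at its central step.
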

\begin{proof}
For \ref{main:1}, substituting the given conditions into the definitions of $U(x,y),V(x,y),W(x)$, we get from Theorem \ref{thm:master} that
\[P(x)=F\left(x^{s+1}P(x)^s\right)\left(1+\sum_{i\in J}x^{s_i+1}P(x)^{s_i+1}\right).\]
By Lemma \ref{lift}, there is a unique bivariate formal power series $\overline{P}(x,z)$, viewed as a formal power series in $\mathbb{C}[[x]][[z]]$ (that is, a formal power series in $z$ with coefficients in $\mathbb{C}[[x]]$) satisfying
\[\overline{P}(x,z)=zF\left(x^{s+1}\overline{P}(x,z)^s\right)\left(1+\sum_{i\in J}x^{s_i+1}\overline{P}(x,z)^{s_i+1}\right).\]
Moreover, by Lemma \ref{lift}, we have, for $m\geq1$
\begin{align*}
[z^m]\overline{P}(x,z)&=\frac1m[z^{m-1}]F\left(x^{s+1}z^s\right)^m\left(1+\sum_{i\in J}x^{s_i+1}z^{s_i+1}\right)^m\\
&=\frac1m\sum_{\substack{sa+b=m-1\\a,b\geq0}}\left([x^a]F(x)^m\right)x^{(s+1)a}\left([x^b]\left(1+\sum_{i\in J}x^{s_i+1}\right)^m\right)x^{b}\\
&=\frac1m\sum_{\substack{sa+b=m-1\\a,b\geq0}}\left([x^a]F(x)^m\right)\left([x^b]\left(1+\sum_{i\in J}x^{s_i+1}\right)^m\right)x^{(s+1)a+b}.
\end{align*}
Therefore, for $n\geq 0$, viewing $\overline{P}(x,z)$ now as in $\mathbb{C}[[z]][[x]]$ and collecting the terms, we have
\[[x^n]\overline{P}(x,z)=\sum_{\substack{(s+1)a+b=n\\a,b\geq0}}\frac{1}{sa+b+1}\left([x^a]F(x)^{sa+b+1}\right)\left([x^b]\left(1+\sum_{i\in J}x^{s_i+1}\right)^{sa+b+1}\right)z^{sa+b+1}.\]
Since this is a finite sum for each $n$, we may substitute $z=1$ into $\overline{P}(x,z)$ to obtain
\[\overline{P}(x,1)=F\left(x^{s+1}\overline{P}(x,1)^s\right)\left(1+\sum_{i\in J}x^{s_i+1}\overline{P}(x,1)^{s_i+1}\right).\] 
Thus, $\overline{P}(x,1)=P(x)$ by Lemma \ref{lift} as they satisfy the same functional equation. Hence, 
\begin{align*}
p_n&=[x^n]P(x)=[x^n]\overline{P}(x,1)\\
&=\sum_{\substack{(s+1)a+b=n\\a,b\geq0}}\frac1{sa+b+1}\left([x^a]F(x)^{sa+b+1}\right)\left([x^b]\left(1+\sum_{i\in J}x^{s_i+1}\right)^{sa+b+1}\right),\\
&=\sum_{i=0}^{\floor{\frac{n}{s+1}}}\frac1{n-i+1}\left([x^i]F(x)^{n-i+1}\right)\left([x^{n-(s+1)i}]\left(1+xV(1,x)\right)^{n-i+1}\right),
\end{align*}
as required.

In the case of \ref{main:2}, we have by Theorem \ref{thm:master} that
\[P(x)=F\left(x^{ws_1+r_1}P(x)^{s_1}\right)\left(1+x^{ws_2+r_2}P(x)^{s_2+1}\right).\]
Like above, let $\overline{P}(x,z)$ be the unique formal power series in $\mathbb{C}[[x]][[z]]$ satisfying
\[\overline{P}(x,z)=zF\left(x^{ws_1+r_1}\overline{P}(x,z)^{s_1}\right)\left(1+x^{ws_2+r_2}\overline{P}(x,z)^{s_2+1}\right).\]
By Lemma \ref{lift}, we have, for $m\geq1$,
\begin{align*}
[z^m]\overline{P}(x,z)&=\frac1m[z^{m-1}]F\left(x^{ws_1+r_1}z^{s_1}\right)^m\left(1+x^{ws_2+r_2}z^{s_2+1}\right)^m\\
&=\frac1m\sum_{\substack{s_1i+(s_2+1)j=m-1\\i,j\geq0}}\left([x^i]F(x)^m\right)x^{(ws_1+r_1)i}\binom{m}{j}x^{(ws_2+r_2)j}\\
&=\frac1m\sum_{\substack{s_1i+(s_2+1)j=m-1\\i,j\geq0}}\left([x^i]F(x)^m\right)\binom{m}{j}x^{(ws_1+r_1)i+(ws_2+r_2)j}.
\end{align*}
Therefore, viewing $\overline{P}(x,z)$ now as in $\mathbb{C}[[z]][[x]]$, for $n\geq 0$, $[x^n]\overline{P}(x,z)$ is equal to
\[\sum_{\substack{(ws_1+r_1)i+(ws_2+r_2)j=n\\i,j\geq0}}\frac{1}{s_1i+s_2j+j+1}\left([x^i]F(x)^{s_1i+s_2j+j+1}\right)\binom{s_1i+s_2j+j+1}{j}z^{s_1i+s_2j+j+1}.\]
Since this is a finite sum for each $n$, we may substitute $z=1$ into $\overline{P}(x,z)$ to obtain
\[\overline{P}(x,1)=F\left(x^{ws_1+r_1}\overline{P}(x,1)^{s_1}\right)\left(1+x^{ws_2+r_2}\overline{P}(x,1)^{s_2+1}\right).\] 
Thus, $\overline{P}(x,1)=P(x)$ by Lemma \ref{lift} as they satisfy the same functional equation. Hence, 
\begin{align*}
p_n&=[x^n]P(x)=[x^n]\overline{P}(x,1)\\
&=\sum_{\substack{(ws_1+r_1)i+(ws_2+r_2)j=n\\i,j\geq0}}\frac{1}{s_1i+(s_2+1)j+1}\left([x^i]F(x)^{s_1i+(s_2+1)j+1}\right)\binom{s_1i+(s_2+1)j+1}{j},
\end{align*}
as required.
\end{proof}

\section{Catalan paths}\label{sec:catalan}
For $k\in\mathbb{Z}_+$, the set $\mathcal{C}_n^{(k)}$ of $k$-Catalan paths of size $n$, is the set of lattice paths from $(0,0)$ to $((k+1)n,0)$ with step set $\{(1,k),(1,-1)\}$ that never go below the $x$-axis. It is well-known that $|\mathcal{C}_n^{(k)}|=\frac1{kn+1}\binom{(k+1)n}{n}$. By applying appropriate geometric transformations, it is easy to see that the set $\mathcal{C}_n^{(k)}$ is in bijection with the following sets of lattice paths. 
\begin{itemize}
    \item Lattice paths from $(0,0)$ to $(n,kn)$ with step set $\{(0,1),(1,0)\}$ that never go below the line $y=kx$.
    \item Lattice paths from $(0,0)$ to $((k+1)n,(k-1)n)$ with step set $\{(1,1),(1,-1)\}$ that never go below the line $y=\frac{k-1}{k+1}x$.
\end{itemize}
This observation will motivate our generalisations of Schr\"{o}der paths in Section \ref{sec:schroeder} and Motzkin paths in Section \ref{sec:motzkin}.

For each $C\in\mathcal{C}_n^{(k)}$, let $\mathbf{u}(C)$ be the vector recording the length of each maximum block of consecutive $(1,k)$ steps in $C$. In the language of Section \ref{sec:main}, we have that if $\mathcal{U}=\{(1,k)\}, \mathcal{V}=\emptyset, \mathcal{W}=\{(1,-1)\}$, then $\mathcal{C}_n^{(k)}=\mathcal{P}_{(k+1)n}$. As such, we can apply Theorem \ref{thm:master} and Theorem \ref{thm:main} to obtain the following general weighted enumeration result for $\mathcal{C}_n^{(k)}$. We remark that this result can be obtain more directly in a slightly simpler manner, but we opted to prove more general results in Section \ref{sec:main}, and obtain the following as a corollary instead. 

\begin{theorem}\label{thm:catalan}
For any function $f:\mathbb{Z}_+\to\mathbb{C}$, let $F(x)=1+\sum_{\ell=1}^\infty f(\ell)x^\ell$. Then for every $k\in\mathbb{Z}_+$ and $n\in\mathbb{Z}_{\geq0}$, we have \[c_n:=\sum_{C\in\mathcal{C}_n^{(k)}}\prod_{i=1}^{|\mathbf{u}(C)|}f(\mathbf{u}(C)_i)=\frac{1}{kn+1}[x^n]F(x)^{kn+1},\]
and the generating function $C(x)=\sum_{n=0}^\infty c_nx^n$ satisfies the functional equation
\[C(x)=F(xC(x)^k).\]
\end{theorem}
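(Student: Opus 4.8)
The plan is to derive Theorem~\ref{thm:catalan} as a direct specialisation of the general machinery in Section~\ref{sec:main}. First I would translate the hypotheses: taking $\mathcal{U}=\{(1,k)\}$, $\mathcal{V}=\emptyset$, and $\mathcal{W}=\{(1,-1)\}$, the auxiliary series become $U(x,y)=x^k y^k$ (since $r=1$, $s=k$), $V(x,y)=0$, and $W(x)=x$. Substituting these into the master equation $P(x)=F(U(x,W(x)P(x)))(1+P(x)V(x,W(x)P(x)))$ of Theorem~\ref{thm:master}, the factor $1+P(x)V(\cdot)$ collapses to $1$, and $U(x,W(x)P(x))=x^k(xP(x))^k=x^{k}\cdot x^{k}P(x)^k$. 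Wait—more carefully, $W(x)P(x)=xP(x)$, so $U(x,xP(x))=x^k(xP(x))^k=x^{2k}P(x)^k$. Hmm, that does not match $C(x)=F(xC(x)^k)$ directly, so one must be careful about which variable is being used for path length; I would reconcile this by noting that for $k$-Catalan paths the relevant length-tracking variable should count a $(1,k)$-step and its $k$ compensating down-steps together, i.e. work with the substitution $x\mapsto$ a variable tracking ``columns'' of width $k+1$. The cleanest route is instead to apply Theorem~\ref{thm:main}\ref{main:1} with $s=k$ and $J=\emptyset$: then $1+xV(1,x)=1$, and the formula there gives $p_n=\sum_{i}\frac{1}{n-i+1}([x^i]F(x)^{n-i+1})([x^{n-(k+1)i}]1^{n-i+1})$.

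Next I would observe that $[x^{n-(k+1)i}](1)^{n-i+1}$ is $1$ when $n=(k+1)i$ and $0$ otherwise. Hence the sum over $i$ has at most one nonzero term, occurring when $(k+1)\mid n$; writing $n=(k+1)m$ (the size of the $k$-Catalan path), that term is $\frac{1}{(k+1)m-m+1}[x^m]F(x)^{(k+1)m-m+1}=\frac{1}{km+1}[x^m]F(x)^{km+1}$. Re-indexing by the size $m$ of the path (which is what the statement calls $n$), this gives exactly $c_n=\frac{1}{kn+1}[x^n]F(x)^{kn+1}$, as required. I would double-check the edge case $n=0$, where the formula reads $1\cdot[x^0]F(x)^1=1$, matching the empty path.

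For the functional equation, I would argue that $C(x)=F(xC(x)^k)$ follows from Lemma~\ref{lift} (Lagrange's Implicit Function Theorem). Set $\phi(x)=F(x^{?}\ldots)$—more precisely, I would introduce the bivariate bookkeeping series exactly as in the proof of Theorem~\ref{thm:main}: let $\overline{C}(x,z)$ be the unique series in $\mathbb{C}[[x]][[z]]$ with $\overline{C}(x,z)=zF(x\overline{C}(x,z)^k)$, apply Lemma~\ref{lift} to compute $[z^m]\overline{C}=\frac1m[z^{m-1}]F(xz^k)^m$, collect terms to see that for each fixed power of $x$ the sum over $z$-powers is finite, substitute $z=1$, and conclude $\overline{C}(x,1)$ satisfies $C(x)=F(xC(x)^k)$ and equals the generating function of the $c_n$ by uniqueness. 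Alternatively and more quickly, I would simply verify that $\frac{1}{kn+1}[x^n]F(x)^{kn+1}=[x^n]C(x)$ where $C(x)=x\cdot\frac{C(x)}{x}$ is the Lagrange inversion solution of $C(x)=\psi(C(x))$ with $\psi=F(x\cdot(\cdot)^k)$—again this is just Lemma~\ref{lift} with $\phi(x)=F(x^{k+1}\cdot x^{?})$ after a change of variables.

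The only real subtlety—and the step I expect to need the most care—is the bookkeeping: the length variable in Section~\ref{sec:main} counts individual unit steps, so a $k$-Catalan path of size $n$ has $x$-degree $(k+1)n$, and one must carefully pass between ``per-unit-step'' generating functions and ``per-column'' generating functions (i.e.\ the substitution that turns $\sum p_{(k+1)n}x^{(k+1)n}$ into $\sum c_n x^n$ via $x^{k+1}\mapsto x$, equivalently setting the dummy $z=1$ as in the proof of Theorem~\ref{thm:main}). Everything else is routine substitution and the observation that $V=0$ kills the more complicated half of the master theorem. I would also remark, as the paper does, that a direct decomposition of $k$-Catalan paths (peel off the initial ascent block of length $\ell$ with weight $f(\ell)$, then $k$ independent sub-paths hanging off the $k$ down-steps that return to the axis) yields the functional equation $C(x)=F(xC(x)^k)$ immediately, and then Lemma~\ref{lift} extracts the coefficient formula; this is the ``slightly simpler'' argument alluded to in the text.
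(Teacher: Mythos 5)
Your route to the coefficient formula is the same as the paper's, and it is correct: specialise \ref{main:1} of Theorem \ref{thm:main} with $s=k$, $J=\emptyset$, note that $[x^{n-(k+1)i}]1$ vanishes unless $n=(k+1)i$, and re-index by the path size. The one genuine error is your computation of $U(x,y)$: by the paper's definition $U(x,y)=\sum_{i\in I}x^{r_i}y^{s_i}$, and for the single step $(1,k)$ we have $r=1$, $s=k$, so $U(x,y)=xy^{k}$, not $x^{k}y^{k}$. Hence $U(x,W(x)P(x))=x\,(xP(x))^{k}=x^{k+1}P(x)^{k}$, and Theorem \ref{thm:master} gives $P(x)=F(x^{k+1}P(x)^{k})$ with no mismatch whatsoever; the discrepancy that made you distrust the master-theorem route was purely an artifact of this slip. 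This matters because the paper's proof of the functional equation is exactly that computation: since $p_n=0$ unless $(k+1)\mid n$ and $c_n=p_{(k+1)n}$, one has $P(x)=C(x^{k+1})$, and substituting into $P(x)=F(x^{k+1}P(x)^{k})$ yields $C(x)=F(xC(x)^{k})$ immediately.

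Your substitute argument for the functional equation is under-justified as written. Defining $\overline{C}(x,z)$ as the unique solution of $\overline{C}=zF(x\overline{C}^{k})$ and setting $z=1$ shows that $\overline{C}(x,1)$ satisfies $Y=F(xY^{k})$, but ``equals the generating function of the $c_n$ by uniqueness'' is not a proof: uniqueness of the solution of the auxiliary equation does not by itself identify it with the combinatorially defined $C(x)$; you must separately show the coefficients agree. That gap is closable: either compute $[x^n]\overline{C}(x,1)$ by Lemma \ref{lift} via $A=x\overline{C}(x,1)^{k}$, so $A=xF(A)^{k}$ and $\overline{C}(x,1)=F(A)$, giving $[x^n]\overline{C}(x,1)=\frac1n[x^{n-1}]F'(x)F(x)^{kn}=\frac1{kn+1}[x^n]F(x)^{kn+1}$, which matches the formula you already proved; or simply use the corrected master-theorem computation above; or use the peel-off-the-first-ascent decomposition you mention at the end, which is indeed the ``slightly simpler'' direct argument the paper alludes to. In short: right approach and correct formula, but fix $U(x,y)$ and replace the ``by uniqueness'' step with one of these completions.
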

\begin{proof}
Apply Theorem \ref{thm:master} with $U(x,y)=xy^k, V(x,y)=0$ and $W(x)=x$, we get that $P(x)=\sum_{n=0}^\infty p_nx^n$ satisfies
\[P(x)=F(x^{k+1}P(x)^k).\]
Noting that $\mathcal{P}_n\not=\emptyset$ only if $k+1$ divides $n$, and that $c_n=p_{(k+1)n}$ for all $n\in\mathbb{Z}_{\geq0}$, we see that $P(x)=C(x^{k+1})$. Hence, $C(x^{k+1})=F(x^{k+1}C(x^{k+1})^k)$, from which it follows that $C(x)=F(xC(x)^k)$.

To extract the coefficients of $C(x)$, one could proceed similarly to the proof of \ref{main:1} in Theorem \ref{thm:main}, but we instead apply \ref{main:1} directly to $P(x)$, which gives 
\begin{align*}
c_n=\sum_{C\in\mathcal{C}_n^{(k)}}\prod_{i=1}^{|\mathbf{u}(C)|}f(\mathbf{u}(C)_i)&=p_{(k+1)n}\\
&=\sum_{i=0}^{n}\frac1{(k+1)n-i+1}\left([x^i]F(x)^{(k+1)n-i+1}\right)\left([x^{(k+1)(n-i)}]1\right)\\
&=\frac1{kn+1}[x^n]F(x)^{kn+1},
\end{align*}
as required, where the last equality follows as only the $i=n$ term in the sum is non-zero.
\end{proof}

As a corollary to Theorem \ref{thm:catalan}, we can obtain explicit weighted enumeration formulas with respect to several notable weight functions. The functional equation satisfied by the generating functions of these sequences can also be obtained easily from Theorem \ref{thm:catalan}. 
\begin{corollary}\label{cor:cat}
For every $k\in\mathbb{Z}_+$ and $n\in\mathbb{Z}_{\geq0}$, we have
\stepcounter{propcounter}
\begin{enumerate}[label= \upshape\textbf{\Alph{propcounter}\arabic{enumi}}]
    \item\label{cor:cat1}\[|\mathcal{C}_n^{(k)}|=\frac1{kn+1}\binom{(k+1)n}{n},\]
and its generating function satisfies
\[C(x)=\frac1{1-xC(x)^k}.\]
    \item\label{cor:cat2}\[\sum_{C\in\mathcal{C}_n^{(k)}}\prod_{i=1}^{|\mathbf{u}(C)|}m\mathbf{u}(C)_i=\frac1{kn+1}\sum_{i=1}^{n}\binom{kn+1}{i}\binom{n+i-1}{n-i}m^i,\]
and its generating function satisfies
\[C(x)=1+\frac{mxC(x)^k}{(1-xC(x)^k)^2}.\]
     \item\label{cor:cat3}\[\sum_{C\in\mathcal{C}_n^{(k)}}\prod_{i=1}^{|\mathbf{u}(C)|}(1+m\mathbf{u}(C)_i)=\frac1{kn+1}\sum_{i=0}^n\binom{kn+1}{i}\binom{(2k+1)n-i+1}{n-i}(m-1)^i,\]
and its generating function satisfies
\[C(x)=\frac{1+(m-1)xC(x)^k}{(1-xC(x)^k)^2}.\]
\end{enumerate}
\end{corollary}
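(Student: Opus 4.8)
The plan is to apply Theorem \ref{thm:catalan} directly, with the three choices of weight function $f$, and then do the coefficient extraction by hand. For \ref{cor:cat1} we take $f(\ell)=1$, so $F(x)=1+\sum_{\ell\geq1}x^\ell=\frac{1}{1-x}$, and Theorem \ref{thm:catalan} immediately gives $C(x)=F(xC(x)^k)=\frac{1}{1-xC(x)^k}$, which is the stated functional equation. For the explicit formula, the clean route is to use the last display of Theorem \ref{thm:catalan}, $c_n=\frac{1}{kn+1}[x^n]F(x)^{kn+1}=\frac{1}{kn+1}[x^n](1-x)^{-(kn+1)}$, and read off $[x^n](1-x)^{-(kn+1)}=\binom{n+kn}{n}=\binom{(k+1)n}{n}$ from the generalized binomial theorem.

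For \ref{cor:cat2} we take $f(\ell)=m\ell$, so $F(x)=1+m\sum_{\ell\geq1}\ell x^\ell=1+\frac{mx}{(1-x)^2}$; substituting into $C(x)=F(xC(x)^k)$ gives the claimed functional equation. For the formula, again $c_n=\frac{1}{kn+1}[x^n]F(x)^{kn+1}=\frac{1}{kn+1}[x^n]\bigl(1+\frac{mx}{(1-x)^2}\bigr)^{kn+1}$. Expanding by the binomial theorem, $\bigl(1+\frac{mx}{(1-x)^2}\bigr)^{kn+1}=\sum_{i\geq0}\binom{kn+1}{i}m^ix^i(1-x)^{-2i}$, and extracting $[x^n]$ from each summand uses $[x^{n-i}](1-x)^{-2i}=\binom{n-i+2i-1}{n-i}=\binom{n+i-1}{n-i}$; the $i=0$ term contributes nothing for $n\geq1$, giving the sum $\frac{1}{kn+1}\sum_{i=1}^n\binom{kn+1}{i}\binom{n+i-1}{n-i}m^i$ (the sum truncates at $i=n$ since $\binom{n+i-1}{n-i}=0$ for $i>n$).

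For \ref{cor:cat3} we take $f(\ell)=1+m\ell$, so $F(x)=1+\sum_{\ell\geq1}(1+m\ell)x^\ell=1+\frac{x}{1-x}+\frac{mx}{(1-x)^2}=\frac{1+(m-1)x}{(1-x)^2}$; substituting into $C(x)=F(xC(x)^k)$ gives the stated functional equation. For the coefficient formula, $c_n=\frac{1}{kn+1}[x^n]\bigl(\frac{1+(m-1)x}{(1-x)^2}\bigr)^{kn+1}=\frac{1}{kn+1}[x^n](1+(m-1)x)^{kn+1}(1-x)^{-2(kn+1)}$; expand $(1+(m-1)x)^{kn+1}=\sum_{i\geq0}\binom{kn+1}{i}(m-1)^ix^i$ and use $[x^{n-i}](1-x)^{-2(kn+1)}=\binom{n-i+2kn+1}{n-i}=\binom{(2k+1)n-i+1}{n-i}$ to obtain exactly $\frac{1}{kn+1}\sum_{i=0}^n\binom{kn+1}{i}\binom{(2k+1)n-i+1}{n-i}(m-1)^i$.

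None of the three cases presents a real obstacle; the only point requiring a little care is computing the three closed forms for $F(x)$ correctly from $f$ (in particular recognizing $\sum_{\ell\geq1}\ell x^\ell=\frac{x}{(1-x)^2}$ and combining the pieces in \ref{cor:cat3}), and then being careful with the shifted-index binomial coefficients $[x^{n-i}](1-x)^{-N}=\binom{n-i+N-1}{N-1}$ when extracting coefficients. I would also note in passing that the upper limits of the sums are automatic: $\binom{n+i-1}{n-i}$ and $\binom{(2k+1)n-i+1}{n-i}$ vanish once $i>n$, so writing $\sum_{i=1}^n$ or $\sum_{i=0}^n$ loses nothing.
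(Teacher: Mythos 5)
Your proposal is correct and follows essentially the same route as the paper: apply Theorem \ref{thm:catalan} with $f(\ell)=1$, $m\ell$, $1+m\ell$, compute the corresponding closed forms of $F(x)$, substitute into $C(x)=F(xC(x)^k)$ for the functional equations, and extract $[x^n]F(x)^{kn+1}$ by the binomial theorem. The coefficient extractions and the closed forms of $F$ all check out, so there is nothing to add.
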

\begin{proof}
For \ref{cor:cat1}, apply Theorem \ref{thm:catalan} with $f(\ell)=1$, we get $F(x)=\frac1{1-x}$ and so
\[|\mathcal{C}_n^{(k)}|=\sum_{C\in\mathcal{C}_n^{(k)}}\prod_{i=1}^{|\mathbf{u}(C)|}1
=\frac{1}{kn+1}[x^n]\frac{1}{(1-x)^{kn+1}}=\frac1{kn+1}\binom{(k+1)n}{n}.\]

For \ref{cor:cat2}, apply Theorem \ref{thm:catalan} with $f(\ell)=m\ell$, we get $F(x)=1+\frac{mx}{(1-x)^2}$ and so
\[\sum_{C\in\mathcal{C}_n^{(k)}}\prod_{i=1}^{|\mathbf{u}(C)|}m\mathbf{u}(C)_i=\frac{1}{kn+1}[x^n]\left(1+\frac{mx}{(1-x)^2}\right)^{kn+1}=\frac1{kn+1}\sum_{i=1}^{n}\binom{kn+1}{i}\binom{n+i-1}{n-i}m^i.\]

For \ref{cor:cat3}, apply Theorem \ref{thm:catalan} with $f(\ell)=1+m\ell$, we get $F(x)=\frac 1{1-x}+\frac{mx}{(1-x)^2}=\frac{1+(m-1)x}{(1-x)^2}$ and so
\begin{align*}
\sum_{C\in\mathcal{C}_n^{(k)}}\prod_{i=1}^{|\mathbf{u}(C)|}(1+m\mathbf{u}(C)_i)&=\frac{1}{kn+1}[x^n]\frac{(1+(m-1)x)^{kn+1}}{(1-x)^{2kn+2}}\\
&=\frac1{kn+1}\sum_{i=0}^n\binom{kn+1}{i}\binom{(2k+1)n-i+1}{n-i}(m-1)^i.
\end{align*}

The functional equations that the generating functions for the sequences in \ref{cor:cat1}-\ref{cor:cat3} satisfy follow by direct substitutions of the corresponding $F(x)$ above into the second part of Theorem \ref{thm:catalan}.
\end{proof}

To end this section, we discuss the connections between these weighted enumerations and other enumeration problems. 

The sequences given by \ref{cor:cat1} enumerate the $k$-Catalan paths and are the well-known Fuss-Catalan numbers. They also enumerate many other combinatorial objects, such as the $(k+1)$-ary trees on $n$ vertices. When $k=1,2,3,4,5,6$, these are OEIS sequences A000108, A001764, A002293, A002294, A002295 and A002296, respectively. 

By \cite{Y}, when $k=m=1$, the sequence given by \ref{cor:cat2}, which is A109081 in OEIS, counts parking functions of size $n$ whose parking outcomes avoid the pattern 123. The following table shows several other matches between the sequences given by \ref{cor:cat2} and OEIS entries. In all of these instances, our results provide first combinatorial interpretations for the corresponding OEIS entries, which were all defined purely algebraically. 

\begin{center}
\begin{tabular}{|c|c|c|c|c|c|}
   \hline$(m,k)$  & $(1,2)$ & $(1,3)$ & $(2,1)$ & $(4,2)$ & $(4,3)$ \\\hline
   OEIS entry  & A367237 & A367280 & A369208 & A371676 & A371678 \\\hline
\end{tabular}    
\end{center}
Interestingly, when $m=4$ and $k=1$, we get OEIS sequence A032349, which reappears later in \ref{cor:schr4} and is related to generalised Schr\"{o}der paths. We delay the discussion till then.

When $m=1$ in \ref{cor:cat3}, the sums reduces to \[\sum_{C\in\mathcal{C}_n^{(k)}}\prod_{i=1}^{|\mathbf{u}(C)|}(1+\mathbf{u}(C)_i)=\frac1{kn+1}\binom{(2k+1)n+1}{n},\]
since only the $i=0$ term is non-zero. By \cite{S}, this is the number of $(2k)_1$-Dyck paths of length $(2k+1)n$, or equivalently the number of pairs of $2k$-Catalan paths of size $n$. This is also the number of $(k+1)$-ary hybrid trees with $n$ internal vertices avoiding a certain pattern by \cite{YJ2}. For $k=1,2,3,4$, the OEIS entries are A006013, A118969, A233832 and A234505, respectively. Moreover, by using these results and similar decompositions as in the proof of Theorem \ref{thm:main}, we can evaluate the following sums
\[\sum_{C\in\mathcal{C}_n^{(k)}}\prod_{i=2}^{|\mathbf{u}(C)|}(1+\mathbf{u}(C)_i)=\frac1{2kn+1}\binom{(2k+1)n}{n},\]
where, compared to above, the first terms in the products are omitted. These turn out to be equal to the number of hyposylvester classes of $k$-parking functions as defined by Novelli and Thibon in \cite{NT}. 

When $m=2$ and $k=1$ in \ref{cor:cat3}, this is OEIS sequence A003169, and by \cite{C} it counts the number of 2-line arrays. When $m=k=2$, this provides a combinatorial interpretation of OEIS sequence A278745. 

When $k=1$ in \ref{cor:cat3}, these sequences can similarly be used along with suitable decompositions as in the proof of Theorem \ref{thm:main} to obtain the evaluations
\[\sum_{C\in\mathcal{C}_n}\prod_{i=2}^{|\mathbf{u}(C)|}(1+m\mathbf{u}(C)_i)=\frac1n\sum_{i=0}^{n-1}\binom{n}{i}\binom{3n-i}{2n+1}(m-1)^i,\]
which are shown in \cite{Y} to be equal to the number of hyposylvester classes of $m$-multiparking functions as defined by Novelli and Thibon in \cite{NT}. When $k=m=1$, the sequence is used in \cite{Y} to show
\[\sum_{C\in\mathcal{C}_n}\prod_{i=1}^{|\mathbf{u}(C)|-1}(1+\mathbf{u}(C)_i)=\frac{\binom{3n+1}{n}}{n+1}-\sum_{i=0}^{n-1}\frac{\binom{3n-3i+1}{n-i}}{2^{i+1}(n-i+1)},\]
which is equal to the number of parking functions of size $n$ that avoids both patterns 312 and 321 under a different notion used by Adeniran and Pudwell in \cite{AP}.

Finally, we note that if we use the non-linear weight function $f(\ell)=\ell!$, then by \cite{Y} the sum over $\mathcal{C}_n$ enumerates parking functions of size $n$ whose parking outcomes avoid the pattern 213, and this is sequence A088368 in OEIS.

\section{Schr\"{o}der paths}\label{sec:schroeder}
The set $\mathcal{S}_n$ of Schr\"{o}der paths of size $n$, is the set of lattice paths from $(0,0)$ to $(2n,0)$ with step set $\{(1,1),(2,0),(1,-1)\}$ that never go below the $x$-axis. Alternatively, this is also commonly defined as the set of lattice paths from $(0,0)$ to $(n,n)$ with step set $\{(0,1),(1,1),(1,0)\}$ that never go below $y=x$, and it is easy to find a bijection between these two sets of lattice paths. 

There are many generalisations of Schr\"{o}der paths in the literature, but in this section we use the following generalisation introduced recently by Yang and Jiang \cite{YJ}. For $k\in\mathbb{Z}_+$, the set $\mathcal{S}_n^{(k)}$ of $k$-Schr\"{o}der paths of size $n$ is the set of lattice paths from $(0,0)$ to $((k+1)n,0)$ with step set $\{(1,k),(2,k-1),(1,-1)\}$ that never go below the $x$-axis. This resembles the way $\mathcal{C}_n^{(k)}$ generalises $\mathcal{C}_n$ in Section \ref{sec:catalan}, because by appropriate geometric transformations, $\mathcal{S}_n^{(k)}$ is in bijection with the following sets of lattice paths.
\begin{itemize}
    \item Lattice paths from $(0,0)$ to $(n,kn)$ with step set $\{(0,1),(1,1),(1,0)\}$ that never go below the line $y=kx$.
    \item Lattice paths from $(0,0)$ to $((k+1)n,(k-1)n)$ with step set $\{(1,1),(2,0),(1,-1)\}$ that never go below the line $y=\frac{k-1}{k+1}x$.
\end{itemize}

For each $S\in\mathcal{S}_n^{(k)}$, let $\mathbf{u}(S)$ be the vector recording the length of each maximum block of consecutive $(1,k)$ steps in $S$. In the language of Section \ref{sec:main}, we have that if $\mathcal{U}=\{(1,k)\}, \mathcal{V}=\{(2,k-1)\}, \mathcal{W}=\{(1,-1)\}$, then $\mathcal{S}_n^{(k)}=\mathcal{P}_{(k+1)n}$. As such, we can apply Theorem \ref{thm:master} and Theorem \ref{thm:main} to obtain the following weighted enumeration result for $\mathcal{S}_n^{(k)}$.
\begin{theorem}\label{thm:schroeder}
For any function $f:\mathbb{Z}_+\to\mathbb{C}$, let $F(x)=1+\sum_{\ell=1}^\infty f(\ell)x^\ell$. Then for every $k\in\mathbb{Z}_+$ and $n\in\mathbb{Z}_{\geq0}$, we have
\[s_n:=\sum_{S\in\mathcal{S}_n^{(k)}}\prod_{i=1}^{|\mathbf{u}(S)|}f(\mathbf{u}(S)_i)=\frac{1}{kn+1}[x^n](1+x)^{kn+1}F(x)^{kn+1},\]
and the generating function $S(x)=\sum_{n=0}^\infty s_nx^n$ satisfies the functional equation
\[S(x)=F(xS(x)^k)(1+xS(x)^k).\]
\end{theorem}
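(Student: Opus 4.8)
The plan is to derive Theorem~\ref{thm:schroeder} as a direct corollary of the general machinery in Section~\ref{sec:main}, exactly as Theorem~\ref{thm:catalan} was derived, since the setup $\mathcal{U}=\{(1,k)\}$, $\mathcal{V}=\{(2,k-1)\}$, $\mathcal{W}=\{(1,-1)\}$ puts us squarely in the scope of \ref{main:2} (with $r_1=1$, $s_1=k$, $r_2=2$, $s_2=k-1$, $w=1$) and also of \ref{main:1} (with $s=k$, $J$ a singleton, $r_i=1$). First I would substitute these data into the master functional equation of Theorem~\ref{thm:master}: here $U(x,y)=xy^k$, $V(x,y)=x^2y^{k-1}$, $W(x)=x$, so $W(x)P(x)=xP(x)$ and the equation reads
\[
P(x)=F\!\left(x^{k+1}P(x)^k\right)\!\left(1+P(x)\,x^2(xP(x))^{k-1}\right)=F\!\left(x^{k+1}P(x)^k\right)\!\left(1+x^{k+1}P(x)^k\right).
\]
As in the proof of Theorem~\ref{thm:catalan}, an easy induction shows $p_n=0$ unless $(k+1)\mid n$, so writing $P(x)=S(x^{k+1})$ with $S(x)=\sum s_nx^n$ and $s_n=p_{(k+1)n}$, the equation collapses to $S(x)=F(xS(x)^k)(1+xS(x)^k)$, which is the claimed functional equation.

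For the explicit formula I would apply \ref{main:2} directly to $P(x)$ with the parameters above. The Diophantine condition $(ws_1+r_1)i+(ws_2+r_2)j=n$ becomes $(k+1)i+(k+1)j=n$, i.e. $i+j=n/(k+1)$ (so again $p_n=0$ unless $(k+1)\mid n$), and the exponent $s_1i+(s_2+1)j+1$ becomes $ki+kj+1=k\cdot\frac{n}{k+1}+1=km+1$ where $m=n/(k+1)$. Hence
\[
s_n=p_{(k+1)n}=\sum_{i+j=m}\frac{1}{km+1}\left([x^i]F(x)^{km+1}\right)\binom{km+1}{j},
\]
and since $i=m-j$ this is $\frac{1}{km+1}\sum_{j}\binom{km+1}{j}[x^{m-j}]F(x)^{km+1}=\frac{1}{km+1}[x^m](1+x)^{km+1}F(x)^{km+1}$ by recognising the sum as the coefficient extraction from a Cauchy product with $(1+x)^{km+1}=\sum_j\binom{km+1}{j}x^j$. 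Replacing $m$ by $n$ (renaming the index, as $s_n$ is indexed by Schröder-path size) gives the stated formula $s_n=\frac{1}{kn+1}[x^n](1+x)^{kn+1}F(x)^{kn+1}$.

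Alternatively, and perhaps more cleanly, one can bypass \ref{main:2} and argue as in Theorem~\ref{thm:catalan}: the functional equation $S(x)=F(xS(x)^k)(1+xS(x)^k)$ is of the form $S=\phi(S)$ amenable to Lagrange inversion after the standard trick of introducing a bivariate series $\overline{S}(x,z)=z\,F(x\overline{S}^k)(1+x\overline{S}^k)$, applying Lemma~\ref{lift} in the variable $z$ to get $[z^m]\overline{S}=\frac1m[z^{m-1}]F(xz^k)^m(1+xz^k)^m$, collecting powers of $x$, specialising $z=1$ (legitimate since each $[x^n]$ is a finite sum), and noting $\overline{S}(x,1)$ satisfies the same equation as $S(x)$ hence equals it. I expect no real obstacle here — everything is a routine specialisation of results already proved — so the main thing to be careful about is bookkeeping: verifying the induction that forces $(k+1)\mid n$, correctly matching the parameters of \ref{main:1}/\ref{main:2} to this instance, and performing the Cauchy-product rewriting of the binomial sum into a single coefficient extraction. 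I would present the proof in the shorter style of Theorem~\ref{thm:catalan}, invoking \ref{main:2} (or \ref{main:1}) as a black box and then simplifying.
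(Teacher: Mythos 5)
Your proposal is correct and follows essentially the same route as the paper's own proof: substitute $U(x,y)=xy^k$, $V(x,y)=x^2y^{k-1}$, $W(x)=x$ into Theorem \ref{thm:master}, use divisibility by $k+1$ to write $P(x)=S(x^{k+1})$ and deduce the functional equation, then apply \ref{main:2} (where the Diophantine condition forces $i+j=n$ and the exponent becomes $kn+1$) and recognise the resulting binomial sum as the Cauchy product $\frac1{kn+1}[x^n](1+x)^{kn+1}F(x)^{kn+1}$. One incidental slip: this setup is not within the scope of \ref{main:1}, since the $\mathcal{V}$-step $(2,k-1)$ has horizontal size $2\neq 1$, but as you only invoke \ref{main:2} this does not affect the argument.
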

\begin{proof}
Apply Theorem \ref{thm:master} with $U(x,y)=xy^k, V(x,y)=x^2y^{k-1}$ and $W(x)=x$, we get that $P(x)=\sum_{n=0}^\infty p_nx^n$ satisfies
\[P(x)=F(x^{k+1}P(x)^k)(1+x^{k+1}P(x)^k).\]
Noting that $\mathcal{P}_n\not=\emptyset$ only if $k+1$ divides $n$, and that $s_n=p_{(k+1)n}$ for all $n\in\mathbb{Z}_{\geq0}$, we see that $P(x)=S(x^{k+1})$. Hence, $S(x^{k+1})=F(x^{k+1}S(x^{k+1})^k)(1+x^{k+1}S(x^{k+1})^k)$, from which it follows that $S(x)=F(xS(x)^k)(1+xS(x)^k)$.

To get a formula for $s_n$, by \ref{main:2} of Theorem \ref{thm:main}, we have
\begin{align*}
s_n=\sum_{S\in\mathcal{S}_n^{(k)}}\prod_{i=1}^{|\mathbf{u}(S)|}f(\mathbf{u}(S)_i)&=p_{(k+1)n}\\
&=\sum_{\substack{(k+1)i+(k+1)j=(k+1)n\\i,j\geq0}}\frac{1}{ki+kj+1}\left([x^i]F(x)^{ki+kj+1}\right)\binom{ki+kj+1}{j}\\
&=\sum_{\substack{i+j=n\\i,j\geq0}}\frac{1}{kn+1}\left([x^i]F(x)^{kn+1}\right)\binom{kn+1}{j}\\
&=\frac{1}{kn+1}[x^n](1+x)^{kn+1}F(x)^{kn+1},
\end{align*}
as required.
\end{proof}

As a corollary, we get the following explicit formulas. 
\begin{corollary}\label{cor:schr}
For every $k\in\mathbb{Z}_+$ and $n\in\mathbb{Z}_{\geq0}$, we have
\stepcounter{propcounter}
\begin{enumerate}[label= \upshape\textbf{\Alph{propcounter}\arabic{enumi}}]
    \item\label{cor:schr1}\[|\mathcal{S}_n^{(k)}|=\frac1{kn+1}\sum_{i=0}^n\binom{kn+1}{n-i}\binom{kn+i}{i},\]
and its generating function satisfies
\[S(x)=\frac{1+xS(x)^k}{1-xS(x)^k}.\]
    \item\label{cor:schr2}\[\sum_{S\in\mathcal{S}_n^{(k)}}\prod_{i=1}^{|\mathbf{u}(S)|}\mathbf{u}(S)_i=\frac1{kn+1}\sum_{i=0}^{\floor{\frac n3}}\binom{kn+1}{i}\binom{(2k+1)n-3i+1}{n-3i},\]
and its generating function satisfies
\[S(x)=\frac{1+x^3S(x)^{3k}}{(1-xS(x)^k)^2}.\]
    \item\label{cor:schr3}\[\sum_{S\in\mathcal{S}_n^{(k)}}\prod_{i=1}^{|\mathbf{u}(S)|}(1+\mathbf{u}(S)_i)=\frac1{kn+1}\sum_{i=0}^n\binom{kn+1}{i}\binom{(2k+1)n-i+1}{n-i},\]
and its generating function satisfies
\[S(x)=\frac{1+xS(x)^k}{(1-xS(x)^k)^2}.\]
    \item\label{cor:schr4}\[\sum_{S\in\mathcal{S}_n^{(k)}}\prod_{i=1}^{|\mathbf{u}(S)|}(1+2\mathbf{u}(S)_i)=\frac1{kn+1}\sum_{i=0}^n\binom{2kn+2}{i}\binom{(2k+1)n-i+1}{n-i},\]
and its generating function satisfies
\[S(x)=\frac{(1+xS(x)^k)^2}{(1-xS(x)^k)^2}.\]
\end{enumerate}
\end{corollary}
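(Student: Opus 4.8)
The plan is to derive everything from Theorem~\ref{thm:schroeder} by choosing the right weight function $f$ in each case, computing $F(x)$, substituting into both conclusions of that theorem, and then simplifying the resulting coefficient extraction. Concretely, for \ref{cor:schr1} take $f(\ell)=1$, so $F(x)=\tfrac1{1-x}$; for \ref{cor:schr2} take $f(\ell)=\ell$, so $F(x)=1+\tfrac{x}{(1-x)^2}=\tfrac{1-x+x^2}{(1-x)^2}$; for \ref{cor:schr3} take $f(\ell)=1+\ell$, so $F(x)=\tfrac1{1-x}+\tfrac{x}{(1-x)^2}=\tfrac1{(1-x)^2}$; and for \ref{cor:schr4} take $f(\ell)=1+2\ell$, so $F(x)=\tfrac1{1-x}+\tfrac{2x}{(1-x)^2}=\tfrac{1+x}{(1-x)^2}$. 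In each case the functional equation $S(x)=F(xS(x)^k)(1+xS(x)^k)$ becomes the stated one after plugging in $F$; e.g.\ for \ref{cor:schr1}, $F(y)(1+y)=\tfrac{1+y}{1-y}$ with $y=xS(x)^k$, and for \ref{cor:schr2}, $F(y)(1+y)=\tfrac{(1-y+y^2)(1+y)}{(1-y)^2}=\tfrac{1+y^3}{(1-y)^2}$, which gives the claimed equation once $y=xS(x)^k$ is substituted; \ref{cor:schr3} and \ref{cor:schr4} are immediate.

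Next I would handle the explicit formulas via the first conclusion $s_n=\tfrac1{kn+1}[x^n](1+x)^{kn+1}F(x)^{kn+1}$. For \ref{cor:schr1} this is $\tfrac1{kn+1}[x^n]\tfrac{(1+x)^{kn+1}}{(1-x)^{kn+1}}$; expanding $(1+x)^{kn+1}=\sum_j\binom{kn+1}{j}x^j$ and $(1-x)^{-(kn+1)}=\sum_i\binom{kn+i}{i}x^i$ and collecting the coefficient of $x^n$ gives $\tfrac1{kn+1}\sum_{i=0}^n\binom{kn+1}{n-i}\binom{kn+i}{i}$. For \ref{cor:schr3}, $(1+x)^{kn+1}F(x)^{kn+1}=\tfrac{(1+x)^{kn+1}}{(1-x)^{2kn+2}}$, and extracting $[x^n]$ via $(1+x)^{kn+1}=\sum_i\binom{kn+1}{i}x^i$ and $(1-x)^{-(2kn+2)}=\sum\binom{(2k+1)n-i+1}{n-i}x^{n-i}$ (after reindexing) yields the stated double sum. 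For \ref{cor:schr4}, $(1+x)^{kn+1}F(x)^{kn+1}=\tfrac{(1+x)^{2kn+2}}{(1-x)^{2kn+2}}$, and the same bookkeeping with $\binom{2kn+2}{i}$ in place of $\binom{kn+1}{i}$ gives the claimed formula. For \ref{cor:schr2}, $(1+x)^{kn+1}F(x)^{kn+1}=\tfrac{(1-x+x^2)^{kn+1}(1+x)^{kn+1}}{(1-x)^{2kn+2}}=\tfrac{(1+x^3)^{kn+1}}{(1-x)^{2kn+2}}$ using $(1-x+x^2)(1+x)=1+x^3$; then $(1+x^3)^{kn+1}=\sum_i\binom{kn+1}{i}x^{3i}$ and $(1-x)^{-(2kn+2)}$ contributes $\binom{(2k+1)n-3i+1}{n-3i}$ to $[x^n]$, giving the sum over $i\le\lfloor n/3\rfloor$.

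For the generating-function identities I would simply note that each follows by substituting the computed $F$ into $S(x)=F(xS(x)^k)(1+xS(x)^k)$ and using the algebraic simplifications above with $y=xS(x)^k$; these are routine and can be stated in one sentence at the end, exactly as in the proof of Corollary~\ref{cor:cat}.

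I expect the only mildly delicate point to be the algebraic identity $(1-x+x^2)(1+x)=1+x^3$ powering the clean form in \ref{cor:schr2}; once that is spotted everything reduces to standard Vandermonde-type coefficient extraction from products of $(1\pm x)^{\text{linear in }n}$, with no genuine obstacle. A minor care point throughout is keeping the binomial reindexing consistent (writing the coefficient of $x^{n-i}$ in $(1-x)^{-(2kn+2)}$ as $\binom{(2k+1)n-i+1}{n-i}$, since $2kn+2+(n-i)-1=(2k+1)n-i+1$), but this is purely bookkeeping.
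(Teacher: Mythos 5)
Your proposal is correct and matches the paper's proof essentially step for step: the same choices of $f$, the same computation of $F(x)$ (including the simplification $(1-x+x^2)(1+x)=1+x^3$ in \ref{cor:schr2}), the same coefficient extraction from Theorem \ref{thm:schroeder}, and the same one-line substitution for the functional equations.
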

\begin{proof}
For \ref{cor:schr1}, apply Theorem \ref{thm:schroeder} with $f(\ell)=1$, we get $F(x)=\frac1{1-x}$ and so
\begin{align*}
|\mathcal{S}_n^{(k)}|&=\sum_{S\in\mathcal{S}_n^{(k)}}\prod_{i=1}^{|\mathbf{u}(S)|}f(\mathbf{u}(S)_i)
=\frac{1}{kn+1}[x^n]\frac{(1+x)^{kn+1}}{(1-x)^{kn+1}}=\frac1{kn+1}\sum_{i=0}^n\binom{kn+1}{n-i}\binom{kn+i}{i}.
\end{align*}

For \ref{cor:schr2}, apply Theorem \ref{thm:schroeder} with $f(\ell)=\ell$, we get $F(x)=1+\frac{x}{(1-x)^2}=\frac{1-x+x^2}{(1-x)^2}$ and so
\begin{align*}
\sum_{S\in\mathcal{S}_n^{(k)}}\prod_{i=1}^{|\mathbf{u}(S)|}\mathbf{u}(S)_i&=\frac{1}{kn+1}[x^n](1+x)^{kn+1}\frac{(1-x+x^2)^{kn+1}}{(1-x)^{2kn+2}}\\
&=\frac{1}{kn+1}[x^n]\frac{(1+x^3)^{kn+1}}{(1-x)^{2kn+2}}=\frac1{kn+1}\sum_{i=0}^{\floor{\frac n3}}\binom{kn+1}{i}\binom{(2k+1)n-3i+1}{n-3i}.
\end{align*}

For \ref{cor:schr3}, apply Theorem \ref{thm:schroeder} with $f(\ell)=1+\ell$, we get $F(x)=\frac 1{1-x}+\frac{x}{(1-x)^2}=\frac{1}{(1-x)^2}$ and so
\begin{align*}
\sum_{S\in\mathcal{S}_n^{(k)}}\prod_{i=1}^{|\mathbf{u}(S)|}(1+\mathbf{u}(S)_i)&=\frac{1}{kn+1}[x^n]\frac{(1+x)^{kn+1}}{(1-x)^{2kn+2}}=\frac1{kn+1}\sum_{i=0}^n\binom{kn+1}{i}\binom{(2k+1)n-i+1}{n-i}.
\end{align*}

For \ref{cor:schr4}, apply Theorem \ref{thm:schroeder} with $f(\ell)=1+2\ell$, we get $F(x)=\frac 1{1-x}+\frac{2x}{(1-x)^2}=\frac{1+x}{(1-x)^2}$ and so
\begin{align*}
\sum_{S\in\mathcal{S}_n^{(k)}}\prod_{i=1}^{|\mathbf{u}(S)|}(1+2\mathbf{u}(S)_i)&=\frac{1}{kn+1}[x^n]\frac{(1+x)^{2kn+2}}{(1-x)^{2kn+2}}=\frac1{kn+1}\sum_{i=0}^n\binom{2kn+2}{i}\binom{(2k+1)n-i+1}{n-i}.
\end{align*}

The functional equations that the generating functions for the sequences in \ref{cor:schr1}-\ref{cor:schr4} satisfy follow by direct substitutions of the corresponding $F(x)$ above into the second part of Theorem \ref{thm:schroeder}.
\end{proof}

We again end the section by discussing the connections between the sequences in Corollary \ref{cor:schr} and other combinatorial objects. The sequences in \ref{cor:schr1} are the large $k$-Schr\"{o}der numbers as defined by Yang and Jiang in \cite{YJ} (although they called them large $(k+1)$-Schr\"{o}der numbers). The OEIS entries when $k=1,2,3,4$ are A006318, A027307, A144097 and A260332, respectively, with various other interpretations listed. 

The $k=1$ case of \ref{cor:schr2} gives the OEIS sequence A369265, and thus provides a combinatorial interpretation of that sequence which comes from a series reversion operation.

The sequences in \ref{cor:schr3} coincides with the sequences in \ref{cor:cat3} when $m=2$ which was discussed above, as their generating functions satisfy the same functional equation. It would be interesting to find a combinatorial proof of this fact.

For \ref{cor:schr4}, when $k=1$ this coincides with A032349, which counts the number of $2$-Schr\"{o}der paths of size $n$ that start with a $(2,1)$-step. More generally, the sequences in \ref{cor:schr4} coincide with the $m=4$ sequences in \ref{cor:cat2}. Both of these facts can be proved straightforwardly using generating functions, but it would be nice to have more combinatorial proofs. Moreover, as mentioned earlier, the $k=2$ and $3$ cases of \ref{cor:schr4} provide combinatorial interpretations for OEIS sequences A371676 and A371678.


\section{Motzkin paths}\label{sec:motzkin}
The set $\mathcal{M}_n$ of Motzkin paths of size $n$, is the set of lattice paths from $(0,0)$ to $(n,0)$ with step set $\{(1,1),(1,0),(1,-1)\}$ that never go below the $x$-axis. Alternatively, it is also commonly defined as the set of lattice paths from $(0,0)$ to $(n,n)$ with step set $\{(0,2),(1,1),(2,0)\}$ that never go below $y=x$, and it is easy to find a bijection between these two sets of lattice paths. 

There are also many generalisations of Motzkin paths in the literature, including one recently proposed by Yang, Zhang and Yang \cite{YZY}, whose definition of $k$-Motzkin paths coincides with our definition of $(2k-1)$-Motzkin paths below. As far as we know, our definition of $k$-Motzkin paths when $k$ is even is new. 

For odd $k\in\mathbb{Z}_+$, the set $\mathcal{M}_n^{(k)}$ of $k$-Motzkin paths of size $n$ is defined to be the set of lattice paths from $(0,0)$ to $(\frac{k+1}2n,0)$ with step set $\{(1,k),(1,\frac{k-1}2),(1,-1)\}$ that never go below the $x$-axis. This resembles the generalisations in Section \ref{sec:catalan} and Section \ref{sec:schroeder}, because by appropriate geometric transformations, $\mathcal{M}_n^{(k)}$ is in bijection with the following sets of lattice paths.
\begin{itemize}
    \item Lattice paths from $(0,0)$ to $(n,kn)$ with step set $\{(0,2),(1,1),(2,0)\}$ that never go below the line $y=kx$.
    \item Lattice paths from $(0,0)$ to $(\frac{k+1}2n,\frac{k-1}2n)$ with step set $\{(1,1),(1,0),(1,-1)\}$ that never go below the line $y=\frac{k-1}{k+1}x$.
\end{itemize}

For even $k\in\mathbb{Z}_+$, we define the set $\mathcal{M}_n^{(k)}$ of $k$-Motzkin paths of size $n$ to be the set of lattice paths from $(0,0)$ to $((k+1)n,0)$ with step set $\{(1,2k),(1,k-1),(1,-2)\}$ that never go below the $x$-axis. This again resembles the generalisations in Section \ref{sec:catalan} and Section \ref{sec:schroeder}, because by appropriate geometric transformations, $\mathcal{M}_n^{(k)}$ is in bijection with the following sets of lattice paths.
\begin{itemize}
    \item Lattice paths from $(0,0)$ to $(2n,2kn)$ with step set $\{(0,2),(1,1),(2,0)\}$ that never go below the line $y=kx$.
    \item Lattice paths from $(0,0)$ to $((k+1)n,(k-1)n)$ with step set $\{(1,1),(1,0),(1,-1)\}$ that never go below the line $y=\frac{k-1}{k+1}x$.
\end{itemize}

However, unlike the generalisations in Section \ref{sec:catalan} and Section \ref{sec:schroeder}, the parity of $k$ plays quite an important role in our definition of $\mathcal{M}_n^{(k)}$, with even values of $k$ presenting some complications, which is perhaps why the authors of \cite{YZY} defined their generalisation only for the odd cases. For example, when $k$ is even, the vertical size of each step is doubled compared to the case when $k$ is odd, in order to avoid fractional coordinates. Further, one can easily show that when $k$ is odd, $k$-Motzkin paths can return to the $x$-axis precisely at points whose $x$-coordinates are divisible by $\frac{k+1}2$, but when $k$ is even, they can return to the $x$-axis precisely when the $x$-coordinates are divisible by $k+1$. This explains why our definitions of the size of $k$-Motzkin paths differ depending on the parity of $k$. We will deal with the odd and even cases separately in Section~\ref{odd} and Section~\ref{even} below, respectively. As will be seen, (weighted) enumerations of $\mathcal{M}_n^{(k)}$ are more difficult when $k$ is even. 

\subsection{$k$-Motzkin paths with $k$ odd}\label{odd}
We first deal with the case when $k$ is odd. For each $M\in\mathcal{M}_n^{(k)}$, let $\mathbf{u}(M)$ be the vector recording the length of each maximum block of consecutive $(1,k)$ steps in $M$. In the language of Section \ref{sec:main}, we have that if $\mathcal{U}=\{(1,k)\}, \mathcal{V}=\{(1,\frac{k-1}2)\}, \mathcal{W}=\{(1,-1)\}$, then $\mathcal{M}_n^{(k)}$ is exactly $\mathcal{P}_{\frac{k+1}2n}$. As such, we can apply Theorem \ref{thm:master} and Theorem \ref{thm:main} to obtain the following weighted enumeration results for $\mathcal{M}_n^{(k)}$ when $k$ is odd.

\begin{theorem}\label{thm:motzkin}
For any function $f:\mathbb{Z}_+\to\mathbb{C}$, let $F(x)=1+\sum_{\ell=1}^\infty f(\ell)x^\ell$. Then for every $n\in\mathbb{Z}_{\geq0}$ and every odd $k\in\mathbb{Z}_+$, we have
\[m_n:=\sum_{M\in\mathcal{M}_n^{(k)}}\prod_{i=1}^{|\mathbf{u}(M)|}f(\mathbf{u}(M)_i)=\sum_{i=0}^{\floor{\frac n2}}\frac{1}{\frac{k+1}2n-i+1}\binom{\frac{k+1}2n-i+1}{n-2i}[x^i]F(x)^{\frac{k+1}2n-i+1},\]
and the generating function $M(x)=\sum_{n=0}^\infty m_nx^n$ satisfies the functional equation
\[M(x)=F(x^2M(x)^k)(1+xM(x)^{\frac{k+1}2}).\]
\end{theorem}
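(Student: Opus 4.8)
The plan is to derive Theorem~\ref{thm:motzkin} as a special case of Theorem~\ref{thm:master} and part~\ref{main:1} of Theorem~\ref{thm:main}, exactly as was done for the Catalan and Schr\"{o}der cases. First I would observe that for odd $k$, with $\mathcal{U}=\{(1,k)\}$, $\mathcal{V}=\{(1,\tfrac{k-1}2)\}$ and $\mathcal{W}=\{(1,-1)\}$, we have $\mathcal{M}_n^{(k)}=\mathcal{P}_{\frac{k+1}2n}$ as stated, so $m_n=p_{\frac{k+1}2n}$. Plugging $U(x,y)=xy^k$, $V(x,y)=xy^{(k-1)/2}$ and $W(x)=x$ into Theorem~\ref{thm:master} gives
\[P(x)=F\!\left(x^{k+1}P(x)^k\right)\!\left(1+x^{(k+1)/2}P(x)^{(k+1)/2}\right).\]
Next I would note that a lattice path with these steps can only return to the $x$-axis at $x$-coordinates divisible by $\tfrac{k+1}2$, so $p_n=0$ unless $\tfrac{k+1}2\mid n$, whence $P(x)=M\!\left(x^{(k+1)/2}\right)$. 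Substituting this into the displayed functional equation and replacing $x^{(k+1)/2}$ by $x$ yields $M(x)=F(x^2M(x)^k)(1+xM(x)^{(k+1)/2})$, establishing the functional equation.

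For the explicit formula, I would invoke part~\ref{main:1} of Theorem~\ref{thm:main} with $s=k$, $W=\{(1,-1)\}$, and a single step in $J$, namely $(r,s)=(1,\tfrac{k-1}2)$, so that $V(1,x)=x^{(k-1)/2}$ and $1+xV(1,x)=1+x^{(k+1)/2}$. Then part~\ref{main:1} gives
\[p_N=\sum_{i=0}^{\floor{N/(k+1)}}\frac1{N-i+1}\left([x^i]F(x)^{N-i+1}\right)\left([x^{N-(k+1)i}]\left(1+x^{(k+1)/2}\right)^{N-i+1}\right),\]
and I would specialise $N=\tfrac{k+1}2n$. The inner coefficient extraction is on $(1+x^{(k+1)/2})^{N-i+1}$: the coefficient of $x^{N-(k+1)i}=x^{\frac{k+1}2(n-2i)}$ is nonzero only when $\tfrac{k+1}2(n-2i)$ is a nonnegative multiple of $\tfrac{k+1}2$, i.e. always when $n\ge 2i$, and equals $\binom{N-i+1}{n-2i}$. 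This collapses the double-indexing into the single sum over $i$ from $0$ to $\floor{n/2}$ with the binomial $\binom{\frac{k+1}2n-i+1}{n-2i}$, matching the claimed formula after setting $N-i+1=\tfrac{k+1}2n-i+1$.

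I expect the main (minor) obstacle to be the bookkeeping in the substitution $x\mapsto x^{(k+1)/2}$: one must be careful that the coefficient extraction in part~\ref{main:1} is naturally in the variable of $\mathcal{P}_N$, so after identifying $P(x)=M(x^{(k+1)/2})$ one is effectively reading off $[x^{\frac{k+1}2n}]P$, and the index $b=N-(k+1)i$ in part~\ref{main:1} must be reinterpreted as a multiple of $\tfrac{k+1}2$ to produce the binomial $\binom{\cdot}{n-2i}$ rather than $\binom{\cdot}{\frac{k+1}2(n-2i)}$; this works precisely because $(1+x^{(k+1)/2})^m$ has a lacunary expansion in powers of $x^{(k+1)/2}$. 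The rest is routine: verifying $[x^0]F(x)=1$ is invertible in $\mathbb{C}$ so Lemma~\ref{lift} applies, and confirming (as in the Catalan and Schr\"{o}der proofs) via the auxiliary series $\overline{P}(x,z)$ that setting $z=1$ is legitimate because each $[x^n]\overline{P}(x,z)$ is a finite sum, so $\overline{P}(x,1)$ satisfies the same functional equation as $P(x)$ and hence equals it.
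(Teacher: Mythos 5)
Your proposal is correct and follows essentially the same route as the paper: apply Theorem~\ref{thm:master} with $U(x,y)=xy^k$, $V(x,y)=xy^{(k-1)/2}$, $W(x)=x$ to get the functional equation via $P(x)=M(x^{(k+1)/2})$, and then apply part~\ref{main:1} of Theorem~\ref{thm:main} at $N=\tfrac{k+1}2n$, reading off the lacunary coefficient $\binom{\frac{k+1}2n-i+1}{n-2i}$ from $(1+x^{(k+1)/2})^{\frac{k+1}2n-i+1}$. The bookkeeping worry you raise is harmless, since part~\ref{main:1} is applied directly to $p_{\frac{k+1}2n}$ in the original variable, exactly as in the paper.
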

\begin{proof}
Apply Theorem \ref{thm:master} with $U(x,y)=xy^k, V(x,y)=xy^{\frac{k-1}2}$ and $W(x)=x$, we get that $P(x)=\sum_{n=0}^\infty p_nx^n$ satisfies
\[P(x)=F(x^{k+1}P(x)^k)(1+x^{\frac{k+1}2}P(x)^{\frac{k+1}2}).\]
Noting that $\mathcal{P}_n\not=\emptyset$ only if $\frac{k+1}2$ divides $n$, and that $m_n=p_{\frac{(k+1)}2n}$ for all $n\in\mathbb{Z}_{\geq0}$, we see that $P(x)=M(x^{\frac{k+1}2})$. Hence, $M(x^{\frac{k+1}2})=F(x^{k+1}M(x^{\frac{k+1}2})^k)(1+x^{\frac{k+1}2}M(x^{\frac{k+1}2})^{\frac{k+1}2})$, from which it follows that $M(x)=F(x^2M(x)^k)(1+xM(x)^{\frac{k+1}2})$.

Moreover, applying \ref{main:1} of Theorem \ref{thm:main}, we have
\begin{align*}
&\phantom{==}\sum_{M\in\mathcal{M}_n^{(k)}}\prod_{i=1}^{|\mathbf{u}(M)|}f(\mathbf{u}(M)_i)=p_{\frac{k+1}2n}\\
&=\sum_{i=0}^{\floor{\frac n2}}\frac{1}{\frac{k+1}2n-i+1}\left([x^i]F(x)^{\frac{k+1}2n-i+1}\right)\left([x^{\frac{k+1}2n-(k+1)i}]\left(1+x^{\frac{k+1}2}\right)^{\frac{k+1}2n-i+1}\right)\\
&=\sum_{i=0}^{\floor{\frac n2}}\frac{1}{\frac{k+1}2n-i+1}\binom{\frac{k+1}2n-i+1}{n-2i}[x^i]F(x)^{\frac{k+1}2n-i+1},
\end{align*}
as required.
\end{proof}

As a corollary, we have the following explicit formulas. 
\begin{corollary}\label{cor:motz}
For every $n\in\mathbb{Z}_{\geq0}$ and every odd $k\in\mathbb{Z}_+$, we have
\stepcounter{propcounter}
\begin{enumerate}[label= \upshape\textbf{\Alph{propcounter}\arabic{enumi}}]
    \item\label{cor:motz1}\[|\mathcal{M}_n^{(k)}|=\sum_{i=0}^{\floor{\frac n2}}\frac{1}{\frac{k+1}2n-i+1}\binom{\frac{k+1}2n-i+1}{n-2i}\binom{\frac{k+1}2n}{i},\]
and its generating function satisfies
\[M(x)=\frac{1+xM(x)^{\frac{k+1}2}}{1-x^2M(x)^k}.\]
    \item\label{cor:motz2}\[\sum_{M\in\mathcal{M}_n^{(k)}}\prod_{i=1}^{|\mathbf{u}(M)|}(1+\mathbf{u}(M)_i)=\sum_{i=0}^{\floor{\frac n2}}\frac{1}{\frac{k+1}2n-i+1}\binom{\frac{k+1}2n-i+1}{n-2i}\binom{(k+1)n-i+1}{i},\]
and its generating function satisfies
\[M(x)=\frac{1+xM(x)^{\frac{k+1}2}}{(1-x^2M(x)^k)^2}.\]
\end{enumerate}
\end{corollary}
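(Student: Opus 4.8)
The plan is to derive Corollary \ref{cor:motz} directly from Theorem \ref{thm:motzkin} by choosing the appropriate weight function $f$ and then specialising $F(x)$, exactly mirroring the arguments used in Corollary \ref{cor:cat} and Corollary \ref{cor:schr}. For \ref{cor:motz1}, take $f(\ell)=1$ for all $\ell$, so that $F(x)=\sum_{\ell\geq0}x^\ell=\frac{1}{1-x}$. Substituting into the formula in Theorem \ref{thm:motzkin} gives
\[|\mathcal{M}_n^{(k)}|=\sum_{i=0}^{\floor{n/2}}\frac{1}{\frac{k+1}{2}n-i+1}\binom{\frac{k+1}{2}n-i+1}{n-2i}[x^i]\frac{1}{(1-x)^{\frac{k+1}{2}n-i+1}},\]
and then I would use the standard negative binomial expansion $[x^i](1-x)^{-(N+1)}=\binom{N+i}{i}$ with $N=\frac{k+1}{2}n-i$, which yields $\binom{\frac{k+1}{2}n}{i}$, giving the stated formula. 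For \ref{cor:motz2}, take $f(\ell)=1+\ell$, so that $F(x)=\frac{1}{1-x}+\frac{x}{(1-x)^2}=\frac{1}{(1-x)^2}$ (the same simplification used for \ref{cor:cat3} and \ref{cor:schr3}); then $[x^i]F(x)^N=[x^i](1-x)^{-2N}=\binom{2N+i-1}{i}$ with $N=\frac{k+1}{2}n-i+1$, i.e. $\binom{(k+1)n-i+1}{i}$, which produces the claimed expression.

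For the functional equations, I would substitute the corresponding $F(x)$ into the second assertion of Theorem \ref{thm:motzkin}, namely $M(x)=F(x^2M(x)^k)(1+xM(x)^{(k+1)/2})$. In case \ref{cor:motz1}, $F(y)=\frac{1}{1-y}$ gives $M(x)=\frac{1+xM(x)^{(k+1)/2}}{1-x^2M(x)^k}$ upon setting $y=x^2M(x)^k$, and in case \ref{cor:motz2}, $F(y)=\frac{1}{(1-y)^2}$ gives $M(x)=\frac{1+xM(x)^{(k+1)/2}}{(1-x^2M(x)^k)^2}$. These are purely mechanical substitutions.

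The only mildly delicate points are purely bookkeeping: one must check that the exponent arithmetic in Theorem \ref{thm:motzkin} is being applied with the right value of the running summand (so that $N=\frac{k+1}{2}n-i+1$ in the binomial coefficient but the power of $F$ inside $[x^i]$ is $F(x)^{\frac{k+1}{2}n-i+1}$ as well, consistent with the theorem statement), and that $k$ odd is precisely what makes $\frac{k+1}{2}$ an integer so that all the coefficients and binomials are well defined. I do not anticipate any genuine obstacle here; the proof is a direct corollary in the same style as the proofs of Corollary \ref{cor:cat} and Corollary \ref{cor:schr}, with the negative-binomial coefficient extraction $[x^i](1-x)^{-M}=\binom{M+i-1}{i}$ being the one identity invoked repeatedly.
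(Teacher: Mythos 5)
Your proposal is correct and follows essentially the same route as the paper: specialise Theorem \ref{thm:motzkin} with $f(\ell)=1$ (so $F(x)=\tfrac1{1-x}$) and $f(\ell)=1+\ell$ (so $F(x)=\tfrac1{(1-x)^2}$), extract the coefficient via the negative binomial expansion, and substitute the same $F$ into the functional equation $M(x)=F(x^2M(x)^k)(1+xM(x)^{\frac{k+1}2})$. The coefficient bookkeeping you describe matches the paper's computation exactly.
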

\begin{proof}
For \ref{cor:motz1}, apply Theorem \ref{thm:motzkin} with $f(\ell)=1$, we get $F(x)=\frac1{1-x}$, and so
\begin{align*}
|\mathcal{M}_n^{(k)}|&=\sum_{i=0}^{\floor{\frac n2}}\frac{1}{\frac{k+1}2n-i+1}\binom{\frac{k+1}2n-i+1}{n-2i}[x^i]\frac1{(1-x)^{\frac{k+1}2n-i+1}}\\
&=\sum_{i=0}^{\floor{\frac n2}}\frac{1}{\frac{k+1}2n-i+1}\binom{\frac{k+1}2n-i+1}{n-2i}\binom{\frac{k+1}2n}{i},
\end{align*}

For \ref{cor:motz2}, apply Theorem \ref{thm:motzkin} with $f(\ell)=1+\ell$, we get $F(x)=\frac 1{1-x}+\frac{x}{(1-x)^2}=\frac{1}{(1-x)^2}$ and so
\begin{align*}
\sum_{M\in\mathcal{M}_n^{(k)}}\prod_{i=1}^{|\mathbf{u}(M)|}(1+\mathbf{u}(M)_i)&=\sum_{i=0}^{\floor{\frac n2}}\frac{1}{\frac{k+1}2n-i+1}\binom{\frac{k+1}2n-i+1}{n-2i}[x^i]\frac1{(1-x)^{(k+1)n-2i+2}}\\
&=\sum_{i=0}^{\floor{\frac n2}}\frac{1}{\frac{k+1}2n-i+1}\binom{\frac{k+1}2n-i+1}{n-2i}\binom{(k+1)n-i+1}{i}.
\end{align*}

The functional equations that the generating functions for the sequences in \ref{cor:motz1} and \ref{cor:motz2} satisfy follow by direct substitution of the corresponding $F(x)$ into the second part of Theorem \ref{thm:motzkin}.
\end{proof}

When $k=1$ in \ref{cor:motz1}, the sequence, which is A001006 on OEIS, is the well-known sequence of Motzkin numbers with many combinatorial interpretations. For general odd $k$, as mentioned before, the sequences in \ref{cor:motz1} count $\frac{k+1}2$-Motzkin paths as defined by Yang, Zhang and Yang \cite{YZY}. Furthermore, when $k=3$, this gives OEIS sequence A006605, which among other things counts the number of ternary trees that avoids a certain pattern by \cite{GPPT}. For $k=5,7,9,11$, the corresponding sequences give combinatorial meanings to OEIS sequences A255673, A365183, A365189 and A265033, respectively.

As far as we can tell, the sequences given by \ref{cor:motz2}, or by other weight functions, are not in OEIS. Even the easy-looking case when the weight function is $f(\ell)=\ell$ does not result in sequences with nice formulas, so it is omitted from the corollary above.

\subsection{$k$-Motzkin paths with $k$ even}\label{even}
In the case when $k$ is even, the presence of the step $(1,-2)$ prevents us from using the framework in Section \ref{sec:main} to evaluate $\sum_{M\in\mathcal{M}_n^{(k)}}\prod_{i=1}^{|\mathbf{u}(M)|}f(\mathbf{u}(M)_i)$. Nevertheless, we are able to at least compute $|\mathcal{M}_n^{(k)}|$ using the kernel method. The first few terms of these sequences when $k=2,4,6,8,10$ are collected in the table below. None of these sequences are in OEIS.

\begin{center}
\begin{tabular}{|c|c|}
\hline
  $k$   &  $|\mathcal{M}_n^{(k)}|$ for $0\leq n\leq 7$ \\\hline
  2   &  1, 2, 17, 204, 2848, 43335, 697194, 11663971\\\hline
  4   &  1, 3, 66, 2100, 78399, 3202513, 138606469, 6245691198\\\hline
  6   &  1, 4, 164, 9837, 694906, 53797628, 4416325803, 377628587186\\\hline
  8   &  1, 5, 327, 31515, 3584682, 447231641, 59192155893, 8162250566928\\\hline
  10   &  1, 6, 571, 80482, 13406549, 2450879425, 475440187468, 96106360517372\\\hline
\end{tabular}
\end{center}

Note that in the formula for $\mu_n$ below, as $\frac {(k+1)n+1}2$ may not be an integer, we are using the generalised binomial coefficient $\binom{\alpha}{k}:=\frac1{k!}\prod_{i=0}^{k-1}(\alpha-i)$ for all $\alpha\in\mathbb{C}$ and $k\in\mathbb{Z}_{\geq0}$. 

\begin{theorem}
For every $n\in\mathbb{Z}_{\geq0}$ and every even $k\in\mathbb{Z}_+$, we have
\[|\mathcal{M}_{n}^{(k)}|=\sum_{i=0}^{2n}(-1)^{i}\mu_i\mu_{2n-i},\]
where \[\mu_n=\frac1{(k+1)n+1}\sum_{i=0}^{\floor{\frac n2}}\binom{\frac {(k+1)n+1}2}{n-i}\binom{n-i}{i}.\]
\end{theorem}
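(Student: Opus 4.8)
The plan is to run the kernel method on a bivariate generating function that also records the current height, extract from it the generating function for returns to the $x$-axis, and then evaluate the coefficients of the latter by Lagrange's Implicit Function Theorem (Lemma~\ref{lift}). Concretely, let $f(x,y)=\sum_P x^{\ell(P)}y^{h(P)}$, where $P$ runs over all lattice paths from $(0,0)$ with steps in $\{(1,2k),(1,k-1),(1,-2)\}$ that never go below the $x$-axis, $\ell(P)$ is the number of steps and $h(P)$ the terminal height; write $f_j(x)=[y^j]f(x,y)$. Decomposing a nonempty path by its last step, and observing that a $(1,-2)$ step is forbidden exactly from heights $0$ and $1$, gives
\[f(x,y)=1+xy^{2k}f(x,y)+xy^{k-1}f(x,y)+xy^{-2}\bigl(f(x,y)-f_0(x)-yf_1(x)\bigr),\]
which after clearing denominators becomes $f(x,y)K(x,y)=y^2-xf_0(x)-xyf_1(x)$ with kernel $K(x,y)=y^2-x(1+y^{k+1}+y^{2k+2})$. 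Since every step changes the height by $-2\bmod(k+1)$ and $k+1$ is odd, a path can return to height $0$ only after a number of steps divisible by $k+1$; hence $|\mathcal{M}_n^{(k)}|=[x^{(k+1)n}]f_0(x)$, and it suffices to determine $f_0$.

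Next I would apply the kernel method. Working over the field of Puiseux series in $x$, the equation $K(x,Y)=0$, i.e.\ $Y^2=x(1+Y^{k+1}+Y^{2k+2})$, has exactly two roots vanishing at $x=0$ (and $2k$ further roots of negative order); writing $q$ for a formal square root of $x$, these two are $Y_1=qR(q)$ and $Y_2=Y_1(-q)=-qR(-q)$ for the unique power series $R(q)=1+O(q^{k+1})$ satisfying $R(q)^2=1+(qR(q))^{k+1}+(qR(q))^{2k+2}$. Because each coefficient $[x^n]f$ is a polynomial in $y$, the substitutions $y=Y_1$ and $y=Y_2$ are legitimate and kill the left-hand side of $f\cdot K=y^2-xf_0-xyf_1$, leaving two linear equations in $f_0,f_1$ whose solution is $f_1=(Y_1+Y_2)/x$ and $xf_0=-Y_1Y_2$, so
\[f_0(x)=-\frac{Y_1Y_2}{x}=R(\sqrt x)\,R(-\sqrt x).\]
A roots-of-unity substitution (or a direct induction, using uniqueness of the solution) shows $R(q)$ is a power series in $q^{k+1}$, say $R(q)=g(q^{k+1})$ with $g(0)=1$ and $g(z)^2=1+zg(z)^{k+1}+z^2g(z)^{2k+2}$. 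Since $k+1$ is odd, $R(-\sqrt x)=g\bigl(-x^{(k+1)/2}\bigr)$, hence $f_0(x)=g\bigl(x^{(k+1)/2}\bigr)g\bigl(-x^{(k+1)/2}\bigr)$. Setting $\mu_n=[z^n]g(z)$ and reading off the coefficient of $x^{(k+1)n}$ from the even power series $g(w)g(-w)$ yields exactly $|\mathcal{M}_n^{(k)}|=\sum_{i=0}^{2n}(-1)^i\mu_i\mu_{2n-i}$.

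It then remains to identify $\mu_n$ with the claimed formula. Putting $h=zg^{k+1}$, the defining relation of $g$ becomes $g=(1+h+h^2)^{1/2}$, whence $g^{k+1}=(1+h+h^2)^{(k+1)/2}$ and $h=z\phi(h)$ with $\phi(t)=(1+t+t^2)^{(k+1)/2}$. Applying Lemma~\ref{lift} with $\psi(t)=(1+t+t^2)^{1/2}$ and using $\psi'(t)\phi(t)^n=\tfrac12(1+2t)(1+t+t^2)^{\frac{(k+1)n-1}{2}}=\tfrac{1}{(k+1)n+1}\,\tfrac{d}{dt}(1+t+t^2)^{\frac{(k+1)n+1}{2}}$ gives, for $n\geq1$,
\[\mu_n=[z^n]\psi(h)=\frac1n[t^{n-1}]\psi'(t)\phi(t)^n=\frac{1}{(k+1)n+1}[t^n](1+t+t^2)^{\frac{(k+1)n+1}{2}},\]
and $\mu_0=g(0)=1$ agrees as well. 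Expanding $(1+t+t^2)^N=\sum_m\binom{N}{m}t^m(1+t)^m$ and extracting $[t^n]$ turns this into $\sum_{i=0}^{\lfloor n/2\rfloor}\binom{\frac{(k+1)n+1}{2}}{n-i}\binom{n-i}{i}$, which is the stated expression for $\mu_n$.

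The main obstacle I anticipate is making the kernel-method step rigorous: one must justify that $K(x,Y)=0$ has precisely two roots that are Puiseux series in $x^{1/2}$ vanishing at the origin, that these two are interchanged by $q\mapsto-q$, and that substituting them into the functional equation is valid and produces two independent relations in $f_0,f_1$ over $\mathbb{C}[[x]]$. Everything afterwards — the coefficient extraction, the reduction modulo $k+1$, and the Lagrange-inversion evaluation of $\mu_n$ — is routine, the only mild subtlety being careful use of generalised binomial coefficients when $\tfrac{(k+1)n+1}{2}$ is not an integer.
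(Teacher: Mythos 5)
Your proposal is correct and takes essentially the same approach as the paper: the same kernel equation $y^2=x\left(1+y^{k+1}+y^{2k+2}\right)$, the two conjugate small roots, the relation $xf_0(x)=-Y_1Y_2$, and Lagrange inversion yielding $\mu_n=\frac{1}{(k+1)n+1}[t^n](1+t+t^2)^{\frac{(k+1)n+1}{2}}$. The differences are only cosmetic — you build the functional equation by a last-step decomposition and package the small roots as $g\left(\pm x^{\frac{k+1}2}\right)$ before extracting coefficients, while the paper works with the step recurrence and the Puiseux coefficients $a_n$ of $y=z\left(1+y^{k+1}+y^{2k+2}\right)^{1/2}$ directly; the rigor points you flag about the two small roots and their substitution are exactly what the paper handles by citing the standard kernel-method references.
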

\begin{proof}
Fix an even $k\in\mathbb{Z}_+$, we proceed using the kernel method to compute $|\mathcal{M}_n^{(k)}|$.

Let $\mathfrak{M}_n^{(k)}$ be the set of $n$-step lattice paths with step set $\{(1,2k),(1,k-1),(1,-2)\}$ that start from $(0,0)$ and never go below the $x$-axis. For each $M\in\mathfrak{M}_n^{(k)}$, let $h(M)$ be the $y$-coordinate of the last vertex in $M$. Let $M_n(y)=\sum_{M\in\mathfrak{M}_n^{(k)}}y^{h(M)}$ for $n\geq 0$, and $M(x,y)=\sum_{n\geq0}x^nM_n(y)$. From the definitions above, we have the recurrence \[M_{n+1}(y)=M_n(y)\left(\frac1{y^2}+y^{k-1}+y^{2k}\right)-\frac1{y^2}\left([y^0]M_n(y)\right)-\frac1y\left([y^1]M_n(y)\right).\]
Substituting this recurrence into $M(x,y)$, we have
\begin{align*}
M(x,y)&=\sum_{n\geq0}x^nM_n(y)\\
&=1+\sum_{n\geq0}\left(M_n(y)\left(\frac1{y^2}+y^{k-1}+y^{2k}\right)-\frac1{y^2}\left([y^0]M_n(y)\right)-\frac1y\left([y^1]M_n(y)\right)\right)x^{n+1}\\
&=1+x\left(\frac1{y^2}+y^{k-1}+y^{2k}\right)M(x,y)-\sum_{n\geq0}\left(\frac1{y^2}\left([y^0]M_n(y)\right)+\frac1y\left([y^1]M_n(y)\right)\right)x^{n+1}.
\end{align*}
Let $F_0(x)=\sum_{n\geq0}\left([y^0]M_n(y)\right)x^{n+1}$ and $F_1(x)=\sum_{n\geq0}\left([y^1]M_n(y)\right)x^{n+1}$, then 
\begin{equation}\label{eq:dagger}\tag{$\dagger$}
\left(1-x\left(\frac1{y^2}+y^{k-1}+y^{2k}\right)\right)M(x,y)=1-\frac1{y^2}F_0(x)-\frac1yF_1(x).
\end{equation}
Note that $[y^0]M_n(y)$ is exactly the number of lattice paths from $(0,0)$ to $(n,0)$ with step set $\{(1,2k),(1,k-1),(1,-2)\}$ that never go below the $x$-axis, so \[|\mathcal{M}_n^{(k)}|=[y^0]M_{(k+1)n}(y)=[x^{(k+1)n+1}]F_0(x).\]

Consider the kernel equation $1-x\left(\frac1{y^2}+y^{k-1}+y^{2k}\right)=0$, which can be rewritten as $y^2=x(1+y^{k+1}+y^{2k+2})$. Let $y_1(x),y_2(x)$ be the two small roots of this kernel equation. By the conjugate principle (see \cite{BF,FS}), $y_1(x)=\sum_{n\geq1}a_nx^{\frac n2}$ and $y_2(x)=\sum_{n\geq 1}(-1)^na_nx^{\frac n2}$, where the coefficients $a_n$ come from the unique power series solution $y(z)=\sum_{n\geq 1}a_nz^n$ given by Lemma \ref{lift} to the functional equation $y=z(1+y^{k+1}+y^{2k+2})^{\frac12}$. Using Lemma \ref{lift}, we get
\[a_n=[z^n]y(z)=\frac1n[z^{n-1}](1+z^{k+1}+z^{2k+2})^\frac n2.\]
Observe that if $a_n\not=0$, then $k+1\mid n-1$. For simplicity, let $\mu_n=a_{(k+1)n+1}$. Then we have
\begin{align*}
\mu_n&=[z^{(k+1)n+1}]y(z)\\
&=\frac1{(k+1)n+1}[z^{(k+1)n}](1+z^{k+1}+z^{2k+2})^\frac {(k+1)n+1}2=\frac1{(k+1)n+1}[z^n](1+z+z^2)^\frac {(k+1)n+1}2\\
&=\frac1{(k+1)n+1}\sum_{i=0}^{\floor{\frac n2}}\binom{\frac {(k+1)n+1}2}{n-i}[z^n](z+z^2)^{n-i}=\frac1{(k+1)n+1}\sum_{i=0}^{\floor{\frac n2}}\binom{\frac {(k+1)n+1}2}{n-i}[z^i](1+z)^{n-i}\\
&=\frac1{(k+1)n+1}\sum_{i=0}^{\floor{\frac n2}}\binom{\frac {(k+1)n+1}2}{n-i}\binom{n-i}{i}.
\end{align*}

From the theory of the kernel method \cite{BF,FS}, we may substitute the two small roots $y=y_1(x),y_2(x)$ into (\ref{eq:dagger}), and obtain the following system of equations as both sides of (\ref{eq:dagger}) equal 0.
\begin{align*}
   F_0(x)+y_1(x)F_1(x) &= y_1^2(x)\\
   F_0(x)+y_2(x)F_1(x) &= y_2^2(x)
\end{align*}
Solving this system, we get $F_0(x)=-y_1(x)y_2(x)$, so 
\begin{align*}
|\mathcal{M}_n^{(k)}|&=[x^{(k+1)n+1}]F_0(x)=-[x^{(k+1)n+1}]y_1(x)y_2(x)\\
&=-[x^{(k+1)n+1}]\left(\sum_{i\geq0}\mu_ix^{\frac{(k+1)i+1}2}\right)\left(\sum_{j\geq0}(-1)^{(k+1)j+1}\mu_jx^{\frac{(k+1)j+1}2}\right)\\
&=-\sum_{i=0}^{2n}(-1)^{(k+1)i+1}\mu_i\mu_{2n-i}=\sum_{i=0}^{2n}(-1)^{i}\mu_i\mu_{2n-i},
\end{align*}
as required.
\end{proof}


\section{Concluding remarks}
Following the discussions in Section \ref{sec:motzkin}, a natural open question is the weighted enumeration of $k$-Motzkin paths when $k$ is even. 
\begin{question}
For $n\in\mathbb{Z}_{\geq0}$, even $k\in\mathbb{Z}_+$, and certain weight function $f:\mathbb{Z}_+\to\mathbb{C}$, compute
\[\sum_{M\in\mathcal{M}_n^{(k)}}\prod_{i=1}^{|\mathbf{u}(M)|}f(\mathbf{u}(M)_i),\]
and/or find a function equation that its generating function satisfies. 
\end{question}

In this paper we focused on the weighted enumerations of three particular generalisations of Catalan paths, Schr\"{o}der paths and Motzkin paths, respectively, with respect to some linear weight functions. In many instances, the resulting sequences either appear as solutions to other enumeration problems, or provide combinatorial interpretations for some OEIS entries defined purely algebraically. In the several aforementioned cases when sequences coming from different combinatorial settings coincide, it would be nice to find combinatorial explanations despite the relatively simple generating function proofs. Furthermore, the results in Section \ref{sec:main} apply much more generally to many other possible generalisations, other families of lattices paths and other weight functions, likely leading to further connections with other enumeration problems and OEIS sequences.

Finally, we pose a related weighted enumeration problem where a slightly different weight is assigned to each lattice path. Several known applications will be described after.
\begin{question}\label{q:2}
Compute for $k,m\in\mathbb{Z}_+$
\[\sum_{C\in\mathcal{C}^{(k)}_n}\prod_{i=2}^{|\mathbf{u}(C)|}\left(1+m\sum_{j=i}^{|\mathbf{u}(C)|}\mathbf{u}(C)_j\right),\]
and/or find a nice function equation that its generating function satisfies.
\end{question}
If $k=1$, it was shown in \cite{Y} that this sum equals the number of metasylvester classes of $m$-multiparking functions as defined by Novelli and Thibon in \cite{NT}. A recurrence formula for this sequence and an intriguing equation that it satisfies were also proved in \cite{Y}, but no explicit formula or nice functional equation for its generating function was found. If, in addition, that $m=1$, then it was shown in \cite{Y} that this equals the number of parking functions of size $n$ whose parking outcomes avoid the pattern 312, and that it coincides with OEIS sequence A132624, which comes from an entirely different context.

If $m=1$, it was shown in \cite{Y} that this equals the number of metasylvester classes of $k$-parking functions as defined in \cite{NT}, though in this case even a nice recurrence formula is not known.

\bibliographystyle{abbrv}
\bibliography{bibliography}
\end{document}